\documentclass[12pt]{article}
\usepackage{graphicx}
\usepackage{amsmath,amsthm,amssymb,enumerate}
\usepackage{euscript,mathrsfs}
\usepackage{color}
\usepackage{dsfont}
\usepackage{url}
\usepackage{comment}
\usepackage[left=2cm,right=2cm,top=3.5cm,bottom=3.5cm]{geometry}
\usepackage{color}
\usepackage[framemethod=tikz]{mdframed}
\allowdisplaybreaks

\usepackage{soul}

\catcode`\@=11 \@addtoreset{equation}{section}

\catcode`\@=12

\newtheorem{Theorem}{Theorem}[section]
\newtheorem{Proposition}[Theorem]{Proposition}
\newtheorem{Lemma}[Theorem]{Lemma}
\newtheorem{Corollary}[Theorem]{Corollary}

\theoremstyle{definition}
\newtheorem{Definition}[Theorem]{Definition}

\newtheorem{Remark}[Theorem]{Remark}

\newcommand{\bTheorem}[1]{
	\begin{Theorem} \label{T#1} }
	\newcommand{\eT}{\end{Theorem}}

\newcommand{\bProposition}[1]{
	\begin{Proposition} \label{P#1}}
	\newcommand{\eP}{\end{Proposition}}

\newcommand{\bLemma}[1]{
	\begin{Lemma} \label{L#1} }
	\newcommand{\eL}{\end{Lemma}}

\newcommand{\bCorollary}[1]{
	\begin{Corollary} \label{C#1} }
	\newcommand{\eC}{\end{Corollary}}

\newcommand{\bRemark}[1]{
	\begin{Remark} \label{R#1} }
	\newcommand{\eR}{\end{Remark}}

\newcommand{\bDefinition}[1]{
	\begin{Definition} \label{D#1} }
	\newcommand{\eD}{\end{Definition}}

\newcommand{\Del}{\Delta_x}

\newcommand{\Ds}{\mathbb{D}_x}

\newcommand{\bfomega}{\boldsymbol{\omega}}

\newcommand{\wbfphi}{\widetilde{\bfphi}}

\newcommand{\bfphi}{\boldsymbol{\varphi}}

\newcommand{\bfpsi}{\boldsymbol{\psi}}

\newcommand{\bFormula}[1]{
	\begin{equation} \label{#1}}
	\newcommand{\eF}{\end{equation}}

\newcommand{\Ov}[1]{\overline{#1}}

\newcommand{\Curl}{{\bf curl}_x}
\newcommand{\DC}{C^\infty_c}

\newcommand{\aleq}{\stackrel{<}{\sim}}

\newcommand{\ageq}{\stackrel{>}{\sim}}

\newcommand{\vr}{\varrho}
\newcommand{\vre}{\vr_\ep}

\newcommand{\vue}{\vu_\ep}

\newcommand{\tvu}{{\tilde \vu}}

\newcommand{\vu}{\vc{u}}

\newcommand{\vc}[1]{{\bf #1}}
\newcommand{\Ome}{\Omega_\ep}

\newcommand{\Div}{{\rm div}_x}
\newcommand{\Grad}{\nabla_x}

\newcommand{\dx}{\,{\rm d} {x}}

\newcommand{\dt}{\,{\rm d} t }

\newcommand{\dxdt}{\dx  \dt}
\newcommand{\intO}[1]{\int_{\Omega} #1 \ \dx}

\newcommand{\D}{{\rm d}}

\newcommand{\ep}{\varepsilon}

\newcommand{\br}{ \nonumber \\ }

\def\softd{{\leavevmode\setbox1=\hbox{d}%
		\hbox to 1.05\wd1{d\kern-0.4ex{\char039}\hss}}}
\definecolor{Cgrey}{rgb}{0.85,0.85,0.85}
\definecolor{Cblue}{rgb}{0.50,0.85,0.85}
\definecolor{Cred}{rgb}{1,0,0}
\definecolor{fancy}{rgb}{0.10,0.85,0.10}
\definecolor{amaranth}{rgb}{0.9, 0.17, 0.31}

\newcommand\Cbox[2]{%
	\newbox\contentbox%
	\newbox\bkgdbox%
	\setbox\contentbox\hbox to \hsize{%
		\vtop{
			\kern\columnsep
			\hbox to \hsize{%
				\kern\columnsep%
				\advance\hsize by -2\columnsep%
				\setlength{\textwidth}{\hsize}%
				\vbox{
					\parskip=\baselineskip
					\parindent=0bp
					#2
				}%
				\kern\columnsep%
			}%
			\kern\columnsep%
		}%
	}%
	\setbox\bkgdbox\vbox{
		\color{#1}
		\hrule width  \wd\contentbox %
		height \ht\contentbox %
		depth  \dp\contentbox
		\color{black}
	}%
	\wd\bkgdbox=0bp%
	\vbox{\hbox to \hsize{\box\bkgdbox\box\contentbox}}%
	\vskip\baselineskip%
}

\mdfdefinestyle{MyFrame}{%
	linecolor=black,
	outerlinewidth=1pt,
	roundcorner=5pt,
	innertopmargin=\baselineskip,
	innerbottommargin=\baselineskip,
	innerrightmargin=10pt,
	innerleftmargin=10pt,
	backgroundcolor=white!20!white}



\begin{document}


\title{On the collective effect of a large system of heavy particles immersed in 
a Newtonian fluid}

\author{Marco Bravin$^1$ \and Eduard Feireisl \thanks{The work of E.F. was partially supported by the
		Czech Sciences Foundation (GA\v CR), Grant Agreement
		24-11034S. The Institute of Mathematics of the Academy of Sciences of
		the Czech Republic is supported by RVO:67985840. The research of A.R. has been supported by the Alexander von Humboldt-Stiftung / Foundation. A.Z has been partially supported by the Basque Government through the BERC 2022-2025 program and by the Spanish State Research Agency through BCAM Severo Ochoa CEX2021-001142 and through project PID2020-114189RB-I00 funded by Agencia Estatal de Investigación (PID2020-114189RB-I00 / AEI / 10.13039/501100011033). A.Z. was also partially supported  by a grant of the Ministry of Research, Innovation and Digitization, CNCS - UEFISCDI, project number PN-III-P4-PCE-2021-0921, within PNCDI III.} 
	\and Arnab Roy$^2$ \and Arghir Zarnescu$^{3,4,5}$}

\date{}

\maketitle

\centerline{$^1$ Departmento de Matem\'atica Aplicada y Ciencias de la Computac\'ion}

\centerline{E.T.S.I. Industriales y de Telecomunicac\'ion, Universidad
de Cantabria, 39005 Santander, Spain.}

\centerline{$^*$ Institute of Mathematics of the Academy of Sciences of the Czech Republic}

\centerline{\v Zitn\' a 25, CZ-115 67 Praha 1, Czech Republic}

\centerline{$^2$ Technische Universit\"{a}t Darmstadt}

\centerline{Schlo\ss{}gartenstra{\ss}e 7, 64289 Darmstadt, Germany.}

\centerline{$^3$ BCAM, Basque Center for Applied Mathematics}

\centerline{Mazarredo 14, E48009 Bilbao, Bizkaia, Spain.}

\centerline{$^4$IKERBASQUE, Basque Foundation for Science, }

\centerline{Plaza Euskadi 5, 48009 Bilbao, Bizkaia, Spain.}

\centerline{$^5$`Simion Stoilow" Institute of the Romanian Academy,}

\centerline{21 Calea Grivi\c{t}ei, 010702 Bucharest, Romania.}

\medskip

\begin{abstract}
	
We consider the motion of a large number of heavy particles in 
a Newtonian fluid occupying a bounded spatial domain. When we say ``heavy", we {mean} a particle with a mass density that approaches infinity at an appropriate rate as its radius vanishes. We show that the collective effect of 
heavy particles on the fluid motion is similar to the Brinkman perturbation of the 
Navier--Stokes system identified in the homogenization process. 

\end{abstract}


{\bf Keywords:} Rigid bodies in a Newtonian fluid, 
heavy body, Brinkman homogenization limit, fluid structure interaction

\tableofcontents

\section{Introduction}
\label{i}

The results of the present paper are complementary to those obtained in our previous work \cite{FeiRoyZar2022}, where the motion of a large number of 
``light'' particles was considered. Here ``light'' means the mass density of each particle is uniformly bounded. In the case of the planar 2d-motion, the result is in a sharp contrast with the homogenization problem, where 
the particles are fixed and produce an additional friction (Brinkman perturbation) 
of the limit Navier--Stokes system. Intuitively, the particles have less influence on the fluid motion if they are allowed to move together with the fluid. Previously, this phenomenon had also been observed in \cite{LacTak} for a single moving ``light'' particle by Lacave, Takahashi.

We consider the motion of a large number of heavy particles in 
a Newtonian fluid. By ``heavy'' we mean that the mass density 
of the particle is large approaching infinity at an appropriate rate when the radius of the particle vanishes. Apparently, the large mass slows down the particle velocity approaching zero in the asymptotic limit. 
We show that the collective effect of the 
heavy particles on the fluid motion is similar to the Brinkman perturbation of the 
Navier--Stokes system identified in the homogenization process. We want to mention that in the case of a single ``heavy'' particle, Iftime and He demonstrate in \cite{He1,He2} that the solution of the fluid-rigid body system approaches a solution of the Navier–Stokes equations (without Brinkman perturbation) in whole space as the diameter of the rigid body approaches zero. The influence of a small rigid body moving in a compressible fluid has recently been analyzed in \cite{FRZ1, FRZ2, BraNec1}.

Let us emphasize that our results concern the genuine fluid--structure 
interaction problem of a system of rigid bodies immersed in  a Newtonian fluid avoiding any simplifying assumptions leading to 
the Stokes--like quasistatic approximation of the fluid equations commonly used in the literature.

Our approach is based on a new result in ``dynamic homogenization'', where 
the particles move with a prescribed velocity, which may be of independent interest. The main idea is conceptually simple, namely the particles will approach a static position in the asymptotic limit. However, its implementation generates complex issues, related fundamentally with the  proper velocity decay in relationship with the size of the particle and their mutual distances. 


First,  
we transform the problem to a fixed spatial domain by means of the change of variables introduced by Inoue and Wakimoto \cite{InWak} and later elaborated by a number of authors, see e.g. 
Cumsille, Takahashi \cite{CumTak}, and \cite{RoyTak}, \cite{BraNec1} for the most recent treatment. The resulting problem is no longer of Navier--Stokes type 
containing a perturbation of the velocity in the time derivative. Next, time regularization is used to obtain a stationary homogenization problem at each fixed time level. At this stage, homogenization limit is performed for any fixed time. It is important that the limit is actually the same for any time depending solely on the spatial distribution of the particles and their radius, see 
Desvilletes, Golse, and Ricci \cite{DesGolRic}, Marchenko and Khruslov  \cite{MarKhr}, or a more recent treatment in \cite{FeiNamNec}. Finally, we relax the time regularization by means of a modification of the Aubin--Lions argument. 

The paper is organized as follows. The problems together with the main results are formulated in Section \ref{P}. Section \ref{k} collects the necessary material concerning homogenization of the stationary Stokes problem. The change in variables is introduced in Section \ref{v}, and the problem is transformed to a fixed spatial domain in Section \ref{N}. The asymptotic limit and proof of the main results are performed in Section \ref{A}. A priori estimates of the velocity of small rigid bodies are established in Section \ref{sec7}. The paper is concluded by a short discussion concerning the extension of the result to the space dimension $d=2$. 

\section{Problem formulation, main result}
\label{P}

We consider a system of $N$ rigid balls $\{ B_{n, \ep} = B(\vc{h}_{n,\ep}, r_\ep) \}_{n=1}^N$, $N = N(\ep)$, centred at points $\vc{h}_{n,\ep} = \vc{h}_{n,\ep}(t)$ and with the common radius $r_\ep$ 
immersed in an incompressible Newtonian fluid occupying a bounded domain $\Omega \subset R^3$. To avoid technicalities, we always assume $\Omega$ is sufficiently smooth, at least of class $C^{2 + \nu}$. The balls move with the rigid velocities  
\begin{equation} \label{P1}
\vc{w}_{n, \ep} (t,x) =  \vc{h}_{n, \ep}'(t) + \bfomega_{n,\ep}(t) \wedge (x - \vc{h}_{n,\ep}(t) ),\ t \in [0,T],\ n = 1,\dots, N(\ep).
\end{equation}

The fluid occupies the domain 
\begin{equation} \label{P2}
	\Omega_{\ep,t} = \Omega \setminus \cup_{n=1}^N B(\vc{h}_{n,\ep}, r_\ep) 
\end{equation}
at each time $t \in [0,T]$. The fluid velocity $\vu_\ep = \vu_{\ep}(t,x)$ 
satisfies the incompressibility constraint 
\begin{equation} \label{P3}
	\Div \vu_\ep (t, \cdot) = 0 \ \mbox{for any}\ t \in [0,T]. 
\end{equation}
The fluid motion is governed by the Navier--Stokes system 
\begin{equation} \label{P4}
\partial_t \vue + \Div (\vue \otimes \vue) + \Grad \Pi_\ep = 
\Del \vu_\ep \ \mbox{in} \ \cup_{t \in (0,T)} \Omega_{\ep,t}.	
	\end{equation} 
For the sake of simplicity, we have set the fluid density as well 
the viscosity coefficient to be one.

We impose the no-slip boundary conditions at the fluid--body interface
\begin{equation} \label{P5}
	\vue|_{\partial \Omega} = 0,\ 
	\vue|_{\partial B_{n,\ep}} = \vc{w}_{n,\ep}|_{\partial B_{n,\ep}}, \ n=1,\dots, N(\ep).
\end{equation}
Accordingly, we consider the initial velocity 
distribution	$\vc{u}_{0,\ep} \in L^2(\Omega; R^3) $ satisfying
\begin{equation}
\label{PMimi}
\Div \vc{u}_{0,\ep} = 0, \quad  \vc{u}_{0,\ep} \cdot \vc{n}|_{\partial \Omega} = 0  \quad \text{ and } \quad \vc{u}_{0,\ep}\cdot  \vc{n}|_{\partial B_{n,\ep}} = \vc{w}_{n,\ep}(0,.) \cdot  \vc{n} |_{\partial B_{n,\ep}} , \ n=1,\dots, N(\ep).
\end{equation}

\subsection{Dynamic homogenization}

 We consider the initial configuration of the particles of \emph{critical} type, where the total (Stokes) capacity of the whole 
ensemble of particles is bounded, while their mutual distance is large enough to keep the collective effect of drag forces balanced with the external forces driving the fluid motion. 

Specifically, we assume
\begin{equation} \label{P1a}
	\frac{r_\ep}{\ep^3} \to 1 \ \mbox{as}\ \ep \to 0,
\end{equation}
while the initial positions of the rigid bodies satisfy
\begin{equation} \label{P6}
|\vc{h}_{i, \ep}(0) - \vc{h}_{j,\ep}(0) | > 2 \ep,\ i\ne j,\ 	
{\rm dist}[\vc{h}_{n,\ep}(0); \partial \Omega] > \ep;\,\, i,j,n\in\{1,\dots,N(\ep)\}
\end{equation}
In addition, we require the translational components of rigid velocities $\vc{w}_{n,\ep}$ to be small, 
\begin{equation} \label{P7}
\sup_{n=1, \dots N(\ep)} \frac{\|\vc{h}'_{n,\ep}\|_{L^\infty(0,T)}}{\ep} \to 0 \ \mbox{as}\ 
\ep \to 0.
\end{equation}
Note carefully that condition \eqref{P7} prevents collisions of the bodies 
in a fixed time interval $[0,T]$ as well as collision of a rigid body with the boundary $\partial \Omega$.

We say that a velocity field $\vue$ is a weak solution of the problem \eqref{P3}, 
\eqref{P4} with initial condition 
$
\vue (0, \cdot) = \vu_{0,\ep} $ satisfying \eqref{PMimi}
if the following holds:
\begin{align} 
	\vue \in L^\infty(0,T; L^2(\Omega; R^3)) &\cap L^2(0,T; W^{1,2}_0(\Omega; R^3)), \label{P8} \\ 
	\vue(t, \cdot)|_{B(\vc{h}_{n,\ep}(t), r_\ep)} &= \vc{w}_{n,\ep}(t, \cdot),\ 
	n=1, \dots, N, \ \mbox{for a.a.}\ t \in (0,T), \label{P9} \\
	\Div \vue &= 0 \ \mbox{in}\ (0,T) \times \Omega, \label{P10} \\
	\int_0^T \intO{ \Big[ \vue \cdot \partial_t \bfphi + 
		[\vue \otimes \vue]: \Grad \bfphi \Big] } \dt &= 
	\int_0^T \intO{ \Grad \vue : \Grad \bfphi } \dt - \intO{ \vu_{0,\ep} \cdot \bfphi (0, \cdot) } 	 
	\label{P11}
	\end{align}
for any test function $\bfphi \in C^1_c ([0,T) \times \Omega; R^3)$, $\Div \bfphi = 0$, satisfying
\begin{equation} \label{P12}
\bfphi (t, \cdot)|_{B(\vc{h}_{n,\ep}(t), r_\ep)} = 0,\ n=1,\dots, N,\ t \in [0,T].
\end{equation} 
	\begin{Remark} \label{bbb}
Unlike the fluid--structure interaction problem considered in Section \ref{FS} below, the rigid velocities are given {\it a priori} in \eqref{P8}--\eqref{P9}. Accordingly, the eligible test functions in the momentum equation \eqref{P11} vanish on the rigid bodies.		
		\end{Remark}

We claim the following result. 

\begin{mdframed}[style=MyFrame]

\begin{Theorem}[{\bf Dynamic homogenization}] \label{TP1}
	
Let $\Omega \subset R^3$ be a bounded domain of class $C^{2 + \nu}$. 	
Suppose the initial distribution of the rigid bodies satisfies 
\begin{align}
&\frac{r_\ep}{\ep^3} \to 1 \ \mbox{as}\ \ep \to 0,\nonumber\\
&\ |\vc{h}_{i, \ep}(0) - \vc{h}_{j,\ep}(0) | > 2 \ep,\ i\ne j,\ 	
{\rm dist}[\vc{h}_{n,\ep}(0); \partial \Omega] > \ep;\,\,i,j,n\in\{1,\dots,N(\ep)\}
\label{P13a}
\end{align}	
In addition, let the rigid body velocities satisfy
\begin{equation} \label{P14}
	\sup_{n=1, \dots N(\ep)} \frac{\|\vc{h}'_{n,\ep}\|_{L^\infty(0,T)}}{\ep^2} \to 0,\ \mbox{as}\ \ep \to 0,\  
	\sup_{n=1, \dots N(\ep)} r_{\ep} \|\bfomega_{n,\ep}\|_{L^\infty(0,T)} \to 0 \ \mbox{as}\ \ep \to 0.
\end{equation}
Let $\vue$ be a weak solution to the moving boundary problem \eqref{P9}--\eqref{P11}, \[ \vue(0, \cdot) = \vu_{0,\ep}\] such that
\begin{equation} \label{P15}
	\| \vue \|_{L^\infty(0,T; L^2(\Omega; R^3))} + \| \vue \|_{L^2(0,T; W^{1,2}_0(\Omega; R^3))} \aleq 1 \ \mbox{uniformly for}\ \ep \to 0. 
\end{equation}

 Then, up to a suitable subsequence

\begin{equation}\label{P13}	
	\ep^3 \sum_{n=1}^{N(\ep)} \delta_{\vc{h}_{n, \ep}(0)} \to \mathfrak{R} \in L^\infty(\Omega) \ \mbox{weakly-(*) in}\ \mathfrak{M}(\Ov{\Omega})
\end{equation}	 and 
\begin{align}
\vue &\to \vu \ \mbox{in}\ L^2((0,T) \times \Omega; R^3) 
\ \mbox{and weakly in} \ L^2(0,T; W_0^{1,2}(\Omega; R^3)), 
\nonumber
\end{align}
where the limit $\vu$ is a weak solution of the Navier--Stokes problem with 
Brinkman friction, 
\begin{align} 
	\Div \vu &=0 , \label{P16}\\
	\int_0^T \intO{ \Big[ \vu \cdot \partial_t \bfphi + [\vu \otimes \vu]: 
		\Grad \bfphi \Big] } \dt &= \int_0^T \intO{ \Big[ \Grad \vu : \Grad \bfphi + 6 \pi \mathfrak{R} \vu \cdot \bfphi \Big]} \dt \br 
	&- \intO{ \vu_0 \cdot \bfphi (0, \cdot)}	
	\label{P17}
	\end{align}
for any $\bfphi \in C^1_c([0,T) \times \Omega; R^3)$, $\Div \bfphi = 0$.	
	
	\end{Theorem}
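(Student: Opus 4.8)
The plan is to carry out the four-step programme announced in the introduction, reducing the dynamical problem to the static homogenization results of Section~\ref{k}.

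\emph{Step~1: compactness and the limiting density.} The separation hypothesis in \eqref{P13a} forces $N(\ep)\lesssim\ep^{-3}$, so the measures $\ep^3\sum_{n}\delta_{\vc{h}_{n,\ep}(0)}$ are bounded in $\mathfrak{M}(\Ov{\Omega})$; the same separation shows that the mass they assign to a ball of radius $\rho$ is $\lesssim\rho^3$, whence every weak-$(*)$ limit $\mathfrak{R}$ is absolutely continuous with $\mathfrak{R}\in L^\infty(\Omega)$, which is \eqref{P13}. Moreover \eqref{P14} gives $|\vc{h}_{n,\ep}(t)-\vc{h}_{n,\ep}(0)|\le T\|\vc{h}'_{n,\ep}\|_{L^\infty(0,T)}=o(\ep^2)$ uniformly in $n$, so the bodies never leave an $o(\ep^2)\ll\ep$ neighbourhood of their initial positions and $\ep^3\sum_n\delta_{\vc{h}_{n,\ep}(t)}\to\mathfrak{R}$ for \emph{every} $t$, the limit being independent of time. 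Combining \eqref{P15} with the momentum equation \eqref{P11}, tested against divergence-free fields vanishing on the bodies, yields a uniform bound on $\partial_t\vue$ in a negative-order space; hence, along a subsequence, $\vue\to\vu$ weakly in $L^2(0,T;W^{1,2}_0)$ and weakly-$(*)$ in $L^\infty(0,T;L^2)$, and $\vu_{0,\ep}\to\vu_0$ weakly in $L^2$.

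\emph{Step~2: fixed domain and time regularization.} I would first apply the Inoue--Wakimoto-type change of variables of Section~\ref{v} to transform the moving-obstacle problem onto $\Omega$ with \emph{frozen} obstacles $B(\vc{h}_{n,\ep}(0),r_\ep)$; since the translational velocities obey \eqref{P14} with the sharper $\ep^2$-rate, the transformation is close to the identity and all the terms it generates (a perturbed time derivative, transported convective and viscous terms) are $o(1)$ times quantities bounded by \eqref{P15}. Then I would regularize in time — mollifying the trajectories and data, or equivalently passing to an implicit time discretization — so that at each time level one faces a genuine stationary Stokes problem in the perforated domain.

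\emph{Step~3: static homogenization and identification of the Brinkman term.} This is the analytic heart. Given a divergence-free $\bfphi\in C^1_c([0,T)\times\Omega;R^3)$ I would, following Section~\ref{k}, build a corrector family $\bfphi_\ep$ with $\Div\bfphi_\ep=0$, $\bfphi_\ep(t,\cdot)|_{B(\vc{h}_{n,\ep}(t),r_\ep)}=0$, $\bfphi_\ep\to\bfphi$ strongly in $L^2$ and weakly in $W^{1,2}$, obtained near each ball by subtracting the rescaled exterior-Stokes cell profile. Testing \eqref{P11} against $\bfphi_\ep$ and passing to the limit then produces
\[
\int_0^T\!\intO{\Grad\vue:\Grad\bfphi_\ep}\dt\to\int_0^T\!\intO{\Grad\vu:\Grad\bfphi}\dt+6\pi\int_0^T\!\intO{\mathfrak{R}\,\vu\cdot\bfphi}\dt,
\]
the extra term arising because the Dirichlet energy of the cell solution around a ball of radius $r$ is $6\pi r+o(r)$, while $\ep^3N(\ep)\,6\pi$ is of order one and distributes according to $\mathfrak{R}$; the dilute scaling $r_\ep\sim\ep^3\ll\ep$ renders the correctors essentially non-interacting, and the slow motion makes the time-frozen corrector differ from the exact one by $o(1)$ in the relevant norms.

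\emph{Step~4: removing the regularization and the nonlinearity.} The total volume of the bodies is $N(\ep)r_\ep^3\sim\ep^6\to0$, so a modified Aubin--Lions argument — using the $\partial_t$-bound valid against test functions vanishing on the bodies together with the shrinking of the obstacle set — gives strong convergence $\vue\to\vu$ in $L^2((0,T)\times\Omega)$, whence $\vue\otimes\vue\to\vu\otimes\vu$ and the convective term passes to the limit; on the bodies $\vue=\vc{h}'_{n,\ep}+\bfomega_{n,\ep}\wedge(x-\vc{h}_{n,\ep})\to0$ by \eqref{P14}, so the rigid constraint disappears, and together with $\vu_{0,\ep}\to\vu_0$ one recovers \eqref{P16}--\eqref{P17}.

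\emph{Main obstacle.} The hard part will be Step~3 in the genuinely time-dependent, non-quasistatic regime: constructing the corrector and controlling the interaction and overlap errors \emph{uniformly in time} (and in the regularization parameter), reconciling the frozen cell problems with the true moving geometry, and checking that the perturbed time derivative introduced by the change of variables does not contribute in the limit — this last point being delicate precisely because the available $\partial_t$-estimate lives only on the time-dependent space of fields vanishing on the bodies, which is exactly what forces the modified Aubin--Lions step in Step~4.
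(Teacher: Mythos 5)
Your plan reproduces the four ingredients the paper actually uses --- change of variables to a fixed domain, time regularization, stationary Stokes homogenization, and a modified Aubin--Lions argument --- so the overall strategy is the same. Two points of divergence are worth flagging, one logical and one structural.

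\emph{Sequencing of strong convergence.} In the paper the modified Aubin--Lions step (Section~\ref{scv}, yielding \eqref{A6}--\eqref{A7}) is carried out \emph{before} the homogenization limit, precisely because the strong $L^2$ convergence of $\vue$ is indispensable to pass to the limit in the quadratic term $[\widetilde{\vu}_\ep\otimes\widetilde{\vu}_\ep]_\delta$ in \eqref{A10}--\eqref{A11}. You place Aubin--Lions in Step~4, \emph{after} the limit passage of Step~3, which is circular as written. Relatedly, the claim in your Step~1 that testing \eqref{P11} against divergence-free fields vanishing on the bodies ``yields a uniform bound on $\partial_t\vue$ in a negative-order space'' is not directly usable: that bound lives on a space depending on $t$ (through the moving obstacles), and by itself it gives no compactness on $\Omega$. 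The paper circumvents this exactly by first freezing the obstacles via the change of variables and then testing the transformed equation \eqref{N14} against $\chi(t)R_\ep[\phi]$, where $R_\ep$ is a restriction operator as in \eqref{A8}; one then gets equicontinuity of $t\mapsto\int_\Omega [D_y\vc{X}_\ep]\cdot\vc{v}_\ep\cdot[D_y\vc{X}_\ep]\,R_\ep[\phi]\,\D y$, and Arzel\`a--Ascoli plus the dilution $R_\ep[\phi]\to\phi$ close the argument. You allude to this at the very end, but the logical dependence should be made explicit: the ``$\partial_t$ bound on the moving domain'' of your Step~1 should be replaced by the fixed-domain bound obtained only after the transformation.

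\emph{Extraction of the Brinkman coefficient.} You propose to build an explicit corrector family $\bfphi_\ep$ and read off the coefficient $6\pi$ from the Dirichlet energy of the exterior Stokes cell solution. That is essentially re-proving the stationary homogenization theorem; moreover, as stated, the claimed limit of $\int\Grad\vue:\Grad\bfphi_\ep$ does not follow from weak convergence alone --- in the standard proof one must feed the corrector back through the Stokes \emph{equation} and account for the pressure, which never appears in your sketch. The paper avoids all of this: after convolving in time, \eqref{N14} becomes a family of genuine stationary Stokes problems \eqref{A10} for $[\vc{v}_\ep]_\delta(\tau,\cdot)$ at each fixed $\tau$, with time derivative, convective term, and remainder $\mathcal{R}_\ep$ relegated to a right-hand side $\vc{g}_{\ep,\delta}(\tau,\cdot)\to 0$ in $W^{-1,2}$; it then applies Proposition~\ref{Pp1} (whose boundary-data hypothesis is verified via Lemma~\ref{LC1} and \eqref{P14}) as a black box, and finally lets $\delta\to0$. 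This modularity --- reduce to stationary, invoke the known Desvillettes--Golse--Ricci/Allaire-type theorem, then undo the regularization --- is cleaner than re-deriving corrector error estimates uniformly in time, and it is what makes the ``perturbed time derivative'' innocuous: it is absorbed into $\vc{g}_{\ep,\delta}$ thanks to \eqref{v14}--\eqref{v15}. If you wish to keep your direct-corrector route, you would need to introduce the pressure, redo the cell-problem analysis in the perforated $\Omega_{\ep,0}$, and prove the $\tau$-uniformity yourself --- doable, but it duplicates work that Proposition~\ref{Pk1} already packages.
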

	
\end{mdframed}	

\begin{Remark} \label{dR1}
Note that hypothesis \eqref{P13a} yields 
\[
\ep^3 N(\ep) \aleq |\Omega| \ \Rightarrow \ep^3\ N(\ep)\to \lambda\ge 0
\]

at least for a suitable subsequence. Accordingly, \eqref{P13} can be written in the form 
\[
\ep^3 \sum_{n=1}^{N(\ep)} \delta_{\vc{h}_{n, \ep}(0)} = \ep^3 N(\ep) \left( \frac{1}{N(\ep)} \sum_{n=1}^{N(\ep)} \delta_{\vc{h}_{n, \ep}(0)} \right), 
\] 
where 
\[
\frac{1}{N(\ep)} \sum_{n=1}^{N(\ep)} \delta_{\vc{h}_{n, \ep}(0)}
\]
is the empirical measure introduced by Desvilletes, Golse, and Ricci \cite{DesGolRic}.  
	
 \end{Remark}


\subsection{Fluid--structure interaction problem}
\label{FS}

In the fluid--structure interaction problem, the velocities of the particles are determined by the mutual interaction with the fluid. Specifically, in addition to 
the no--slip condition \eqref{P5}, continuity of the momenta is imposed. 

For the sake of simplicity, we suppose all particles have the same mass density 
$\vr_{\ep, S} > 0$. The weak formulation of the fluid--structure 
interaction problem reads as follows, see e.g. \cite[Section 2.1]{FeiRoyZar2022}.

\begin{itemize}
\item {\bf Regularity.} 
\begin{align} 
	\vre > 0,\ \vre &\in L^\infty ((0,T) \times R^3) \cap C([0,T]; L^1(\Omega)), \br \vue &\in L^\infty(0,T; \Omega) \cap L^2(0,T; W^{1,2}_0(\Omega)),\ \Div \vue = 0, 
	\br
	\vc{h}_{n,\ep} &\in W^{1,\infty}([0,T]; R^3),\ 
	\bfomega_{n, \ep} \in W^{1,\infty}([0,T]; R^3) ,\ n = 1, \dots, N.
	\label{P20}
	\end{align}	
	
\item {\bf Compatibility.}
\begin{align} 
\vre (t,x) &= \left\{ \begin{array}{l} \vr_{\ep, S} \ \mbox{if}\ 
	 x \in B(\vc{h}_{n,\ep}(t), r_\ep),\ t \in [0,T] \\ 
	 1 \ \mbox{otherwise,} \end{array} \right.	\br
\vue(t, \cdot)|_{B(\vc{h}_{n,\ep}(t), r_\ep)} &= \vc{w}_{n,\ep}(t),\ 
n=1, \dots, N, \ \mbox{for a.a.}\ t \in (0,T). 	 
	\label{P21}
	\end{align}	
	
\item {\bf Mass conservation.}	
\begin{equation} \label{P22}
	\int_0^T \int_{R^3} \Big[ \vre \partial_t \varphi + \vre \vue \cdot \Grad 
	\varphi \Big] \dxdt = - \int_{R^3} \vr_{0,\ep} \varphi (0, \cdot) \dx
\end{equation}
for any $\varphi \in C^1_c ([0,T) \times R^3)$. 

\item {\bf Momentum balance.}	
\begin{equation} \label{P23}
	\int_0^T \intO{ \Big[ \vre \vue \cdot \partial_t \bfphi + 
		\vre[\vue \otimes \vue]: \Grad \bfphi \Big] } \dt = 
	\int_0^T \intO{ \Ds \vue : \Ds \bfphi } \dt - \intO{ \vr_{\ep,0} 
		\vu_{\ep,0} } 
\end{equation}	
for any test function $\bfphi \in C^1_c ([0,T) \times \Omega)$, $\Div \bfphi = 0$,
	\begin{equation} \label{P24}
		\Ds \bfphi (t, \cdot) = 0 
		\ \mbox{on an open neighbourhood of}\ B(\vc{h}_{n,\ep}(t), r_\ep),\  n=1,\dots, N,\ t \in [0,T].
	\end{equation}
The symbol $\Ds$ denotes the symmetric part of the gradient, 
\[
\Ds = \frac{1}{2} \left( \Grad + \Grad^t \right).
\]
\item {\bf Energy dissipation.}
\begin{equation} \label{P25}
\frac{1}{2} \intO{  \vre |\vue|^2 (\tau, \cdot) } + 
\int_0^\tau \intO{ |\Ds \vue |^2 } \dt \leq \frac{1}{2} \intO{ \vr_{\ep,0} |\vu_{\ep, 0} |^2 }  	
	\end{equation}	
for a.a. $\tau \in (0,T)$.	 
	\end{itemize}
	
The following result is a consequence of Theorem \ref{TP1}.

\begin{mdframed}[style=MyFrame]

\begin{Theorem} \label{TP2}
	{\bf (Fluid--structure interaction with heavy particles.)} 	
	Let $\Omega \subset R^3$ be a bounded domain of class $C^{2 + \nu}$. 	
Suppose the initial distribution of the rigid bodies satisfies 
\begin{align*}
&\frac{r_\ep}{\ep^3} \to 1 \ \mbox{as}\ \ep \to 0,\nonumber\\
&\|\vc{h}_{i, \ep}(0) - \vc{h}_{j,\ep}(0) | > 2 \ep,\ i\ne j,\ 	
{\rm dist}[\vc{h}_{n,\ep}(0); \partial \Omega] > \ep; \,\,i,j,n\in\{1,\dots,N(\ep)\}
\end{align*}	 
	Let the rigid densities and the initial velocity of the rigid body satisfy 
	\begin{equation} \label{P27}
	\vr_{\ep, S} \ep^{\frac{19}{2}}  \to \infty \quad \text{ and } \quad \sup_{n \in N(\ep)} \frac{|\vc{h}_{n,\ep}'(0)|}{\ep^2} \to 0 \ 
 \mbox{as}\ \ep \to 0.
	\end{equation}
	Let $(\vre, \vue)$ be a weak solution to the fluid--structure interaction problem \eqref{P20}--\eqref{P25} with bounded initial energy,  
	\begin{equation} \label{P28}
	\intO{ \vr_{\ep,0} |\vu_{\ep, 0}|^2 } \aleq 1 \ \mbox{uniformly for}\ \ep \to 0. 
	\end{equation} 
	Then, up to a suitable subsequence

	\begin{equation} \label{P131}	
\ep^3\sum_{n=1}^{N(\ep)} \delta_{\vc{h}_{n, \ep}(0)} \to \mathfrak{R} \in L^\infty(\Omega) \ \mbox{weakly-(*) in}\ \mathfrak{M}(\Ov{\Omega})
\end{equation}	  and 
	\[
	\vue \to \vu \ \mbox{in}\ L^2((0,T) \times \Omega; R^3) 
	\ \mbox{and weakly in} \ L^2(0,T; W^{1,2}_0(\Omega; R^3)), 
	\]
	where the limit $\vu$ is a weak solution of the Navier--Stokes problem with 
	Brinkman friction, 
	\begin{align} 
		\Div \vu &=0 , \label{P29}\\
		\int_0^T \intO{ \Big[ \vu \cdot \partial_t \bfphi + [\vu \otimes \vu]: 
			\Grad \bfphi \Big] } \dt &=  \int_0^T \intO{ \Big[ \Grad \vu : \Grad \bfphi + 6 \pi \mathfrak{R} \vu \cdot \bfphi \Big]} \dt
		\label{P30} \\
		& - \intO{ \vu_{0}\cdot \bfphi(0,.) } \nonumber 
	\end{align}
	for any $\bfphi \in C^1_c([0,T) \times \Omega; R^3)$, $\Div \bfphi = 0$.	
	
\end{Theorem}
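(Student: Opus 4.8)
The plan is to deduce Theorem~\ref{TP2} from Theorem~\ref{TP1}; the two things that must be supplied are the a~priori bound \eqref{P15} on the fluid velocity and the decay \eqref{P14} of the rigid velocities, after which a weak solution of the fluid--structure interaction problem is, modulo a standard density argument, a weak solution of the dynamic homogenization problem and Theorem~\ref{TP1} applies. \emph{Step 1 (energy bounds).} For $\ep$ small one has $\vr_{\ep,S}\ge1$, hence $\vre\ge1$, so \eqref{P25} together with \eqref{P28} gives $\|\vue\|_{L^\infty(0,T;L^2(\Omega;R^3))}^2+\|\Ds\vue\|_{L^2((0,T)\times\Omega)}^2\lesssim1$, i.e.\ \eqref{P15} by Korn's inequality. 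Restricting \eqref{P25} to the balls, on which the translational and rotational parts of $\vc{w}_{n,\ep}$ are $L^2$--orthogonal, yields
\[
\sup_{\tau\in(0,T)}\ \sum_{n=1}^{N(\ep)}\Big(\vr_{\ep,S}r_\ep^3\,|\vc{h}'_{n,\ep}(\tau)|^2+\vr_{\ep,S}r_\ep^5\,|\bfomega_{n,\ep}(\tau)|^2\Big)\lesssim1 .
\]
As $r_\ep\sim\ep^3$ and $\vr_{\ep,S}\ep^9=(\vr_{\ep,S}\ep^{19/2})\,\ep^{-1/2}\to\infty$ by \eqref{P27}, this already delivers the rotational part of \eqref{P14}, $\sup_n r_\ep\|\bfomega_{n,\ep}\|_{L^\infty(0,T)}\lesssim(\vr_{\ep,S}r_\ep^3)^{-1/2}\to0$, and the crude bound $\sup_n\|\vc{h}'_{n,\ep}\|_{L^\infty(0,T)}\lesssim(\vr_{\ep,S}\ep^9)^{-1/2}$, which is not yet sufficient for the translational part of \eqref{P14}.

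\emph{Step 2 (sharp bound on the translational velocities --- the main point; this is the content of Section~\ref{sec7}).} Fix $n$ and a unit vector $\vc{e}_i$, and let $\boldsymbol{\Phi}_n^i(s,\cdot)$ be the divergence--free Stokes cut--off of Section~\ref{k}, equal to $\vc{e}_i$ on a neighbourhood of $B(\vc{h}_{n,\ep}(s),r_\ep)$, supported in $B(\vc{h}_{n,\ep}(s),\ep/2)$, and satisfying the capacity estimates $\|\Grad\boldsymbol{\Phi}_n^i\|_{L^2}\lesssim r_\ep^{1/2}$, $\|\boldsymbol{\Phi}_n^i-\vc{e}_i\ind_{B(\vc{h}_{n,\ep}(s),r_\ep)}\|_{L^2}\lesssim r_\ep\,\ep^{1/2}$, $\|\Grad\boldsymbol{\Phi}_n^i\|_{L^{3/2}}\lesssim r_\ep\,|\log\ep|^{2/3}$; a continuation argument fed by the bound obtained below shows that the particles keep mutual distance $\gtrsim\ep$ on $[0,T]$, so this cut--off is admissible throughout. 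Testing the momentum balance \eqref{P23} with $\eta(s)\boldsymbol{\Phi}_n^i(s,\cdot)$, $\eta\nearrow\ind_{[0,t]}$, and using that $\boldsymbol{\Phi}_n^i$ is \emph{constant} on each ball --- so that $\Grad\boldsymbol{\Phi}_n^i$, $\Ds\boldsymbol{\Phi}_n^i$ and $\partial_s\boldsymbol{\Phi}_n^i$ are supported in the thin annulus $B(\vc{h}_{n,\ep},\ep/2)\setminus B(\vc{h}_{n,\ep},r_\ep)$ --- one arrives at
\[
m_{n,\ep}\,\vc{h}'_{n,\ep}(t)\cdot\vc{e}_i=m_{n,\ep}\,\vc{h}'_{n,\ep}(0)\cdot\vc{e}_i+\mathcal{R}_\ep(t),\qquad m_{n,\ep}\sim\vr_{\ep,S}r_\ep^3,
\]
where $\mathcal{R}_\ep(t)$ collects the fluid terms (the kinetic and convective contributions on the annulus, the viscous drag $\int_0^t\!\int_\Omega\Ds\vue:\Ds\boldsymbol{\Phi}_n^i$, and the $L^2$--remainders on the annulus at times $0$ and $t$). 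By Hölder's inequality, Step~1 and the capacity estimates, $|\mathcal{R}_\ep(t)|\lesssim r_\ep^{1/2}+r_\ep|\log\ep|^{2/3}\lesssim\ep^{3/2}$, the binding term being the viscous drag. Dividing by $m_{n,\ep}\,\ep^2\sim\vr_{\ep,S}\ep^{11}$ and using \eqref{P27} --- $\vr_{\ep,S}\ep^{19/2}\to\infty$ controls $\ep^{3/2}/(\vr_{\ep,S}\ep^{11})=(\vr_{\ep,S}\ep^{19/2})^{-1}$, and $\sup_n|\vc{h}'_{n,\ep}(0)|/\ep^2\to0$ handles the initial term --- we obtain
\[
\sup_{n}\ \frac{\|\vc{h}'_{n,\ep}\|_{L^\infty(0,T)}}{\ep^2}\lesssim\frac{1}{\vr_{\ep,S}\ep^{19/2}}+\sup_n\frac{|\vc{h}'_{n,\ep}(0)|}{\ep^2}+o(1)\ \longrightarrow\ 0,
\]
which is the remaining part of \eqref{P14}. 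This is precisely where the exponent $19/2$ originates.

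\emph{Step 3 (reduction to Theorem~\ref{TP1}).} With \eqref{P15} and \eqref{P14} in hand, \eqref{P8}--\eqref{P10} follow from \eqref{P20}--\eqref{P21}, and for the momentum equation any $\bfphi\in C^1_c([0,T)\times\Omega;R^3)$ with $\Div\bfphi=0$ vanishing on a space--time neighbourhood of $\bigcup_n\{(t,x):x\in B(\vc{h}_{n,\ep}(t),r_\ep)\}$ has $\Ds\bfphi=0$ near the balls and is thus admissible in \eqref{P23}; since $\vre\equiv1$ and $\vr_{\ep,0}\equiv1$ on the support of $\bfphi$ and of its derivatives, \eqref{P23} reduces --- up to the immaterial normalisation of the Newtonian viscous stress --- to the weak momentum balance \eqref{P11}. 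Such $\bfphi$ being dense in the test class \eqref{P12}, \eqref{P11} holds for all admissible test functions, so $\vue$ is a weak solution of the dynamic homogenization problem satisfying \eqref{P15}, and Theorem~\ref{TP1} yields \eqref{P131} together with the strong convergence in $L^2((0,T)\times\Omega;R^3)$ and weak convergence in $L^2(0,T;W^{1,2}_0(\Omega;R^3))$ of $\vue$ to a weak solution $\vu$ of the Navier--Stokes system with Brinkman friction \eqref{P29}--\eqref{P30}.

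The main obstacle is Step~2: one must beat the energy bound on $\|\vc{h}'_{n,\ep}\|_{L^\infty}$ by precisely the margin allowed by \eqref{P27}. The decisive idea is to use the sharp Stokes--capacity cut--off, so that every term carrying a derivative of the test function is confined to the thin annulus around each ball; a careful bookkeeping of the resulting remainders then produces the critical exponent $19/2$. Everything else is a reduction to Theorem~\ref{TP1} and classical density arguments.
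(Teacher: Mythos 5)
Your plan is the paper's own: establish the energy bound \eqref{P15} from \eqref{P25}--\eqref{P28}, show the sharper decay \eqref{P14} of the rigid velocities by testing the momentum balance against a cut--off localized near each ball (your Step~2 is precisely Proposition~\ref{Impr:contr}, proved in Section~\ref{sec7}), derive the angular part of \eqref{P14} directly from the energy bound, and then feed everything into Theorem~\ref{TP1}.

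The one place where you genuinely depart from the paper is the cut--off in Step~2. You use the Stokes--capacity cut--off at scale $\ep$, supported in $B(\vc{h}_{n,\ep},\ep/2)$ and decaying like $r_\ep/|x|$, whereas the paper's test function $\psi_{n,\ep}$ is built from the curl construction \eqref{cut:off:2} and is supported in an $O(r_\ep)$--neighbourhood of the ball. Because the Stokes capacity of a ball of radius $r_\ep$ is $\sim r_\ep$ independently of the outer scale, both choices give $\|\Grad\bfphi\|_{L^2}\sim r_\ep^{1/2}$, so the dominant viscous-drag estimate and hence the exponent $19/2$ come out identically. The paper's scale-$r_\ep$ cut--off is slightly cleaner: all remainders live on a set of measure $O(r_\ep^3)$, and $\|\Grad\Psi_{r_\ep}\|_{L^{3/2}}\sim r_\ep$ with no $|\log\ep|$ factor, while your convective remainder picks up $r_\ep|\log\ep|^{2/3}$ (still negligible). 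You also run a direct $\eta\nearrow\mathbf{1}_{[0,t]}$ estimate rather than the paper's duality argument against $\gamma\in L^1(0,T)$; both require the same two supporting devices, namely absorbing the term in $\mathcal{R}_\ep(t)$ that carries $\vc{h}'_{n,\ep}$ (the paper does this via a smallness/absorption step justified by \eqref{H1}; you can bypass it by inserting the crude energy bound $|\vc{h}'_{n,\ep}|\lesssim(\vr_{\ep,S}\ep^9)^{-1/2}$, but this should be said explicitly), and the continuation argument that keeps the mutual distances $\gtrsim\ep$ on $[0,T]$, which you correctly flag and which the paper carries out at the end of Section~\ref{sec7}. Steps~1 and~3 match the paper line by line.
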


\end{mdframed}
Theorem \ref{TP2} follows directly from Theorem \ref{TP1} combined with the following result providing ontrol of the velocity of a small rigid body based on the ideas of \cite{BraNec2}.

\begin{Proposition}
\label{Impr:contr}
Let $(\vre, \vue)$ be a weak solution to the fluid--structure interaction problem \eqref{P20}--\eqref{P25} with bounded initial energy \eqref{P28}.
If 
\begin{equation}
\label{H1}
\sup_{n \in N(\ep)} \frac{|\vc{h}_{n,\ep}'(0)|}{\ep} \to 0 \quad \text{ and } \quad \sup_{n \in N(\ep)} \frac{r_{\ep}^{1/2}}{\ep m_{n,\ep}} \to 0  \quad \text{ as } \ep \to 0,
\end{equation}	
then, there exists $ \bar{\ep} > 0  $ such that for any $ 0 < \ep < \bar{\ep} $, it holds   
\begin{equation}
\label{bound:velocity}
\| \vc{h}'_{n,\ep} - \vc{h}'_{n,\ep}(0)\|_{L^{\infty}(0,T)} + r_{\ep} \|\bfomega_{n,\ep}-\bfomega_{n,\ep}(0) \|_{L^{\infty}(0,T)} \leq C \frac{r_{\ep}^{1/2}}{m_{n,\ep}},
\end{equation}
where $ C $ does not depends on $ n $ or $ \ep $.

\end{Proposition}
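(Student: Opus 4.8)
The plan is to extract, for each body, its balance of linear and angular momentum from the weak momentum identity \eqref{P23} by testing with divergence--free fields that coincide with a rigid velocity on a thin collar around that body, and to close the argument by a continuation in time in which the (a priori unknown) smallness of $\vc{h}'_{n,\ep}$ is used to keep the bodies pairwise $\ep$--separated and away from $\partial\Omega$ on all of $[0,T]$. To this end let $\tau^\ast=\tau^\ast(\ep)\le T$ be the largest time with
\[
|\vc{h}_{i,\ep}(t)-\vc{h}_{j,\ep}(t)|\ge\tfrac{3}{2}\ep\ \ (i\ne j),\qquad \dist[\vc{h}_{n,\ep}(t),\partial\Omega]\ge\tfrac{3}{4}\ep\qquad\text{for all }t\in[0,\tau^\ast].
\]
By the initial configuration \eqref{P13a} and continuity of $t\mapsto\vc{h}_{n,\ep}(t)$ we have $\tau^\ast>0$, and on $[0,\tau^\ast]$ each ball $B(\vc{h}_{n,\ep}(t),r_\ep)$ is wrapped, inside the fluid region, by a free collar of width of order $\ep\gg r_\ep$, which makes the test functions below admissible.

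Fix $n$, a unit vector $\vc{a}\in R^3$, and a smooth cut--off $\chi$ with $\chi\equiv1$ on a neighbourhood of $\overline{B(0,r_\ep)}$, $\supp\chi\subset B(0,2r_\ep)$, $|\nabla\chi|\aleq r_\ep^{-1}$, and set
\[
\Phi^{\vc{a}}_n(t,x)=\chi(x-\vc{h}_{n,\ep}(t))\,\vc{a}-\mathcal{B}_t\big[\nabla\chi(\cdot-\vc{h}_{n,\ep}(t))\cdot\vc{a}\big](x),
\]
where $\mathcal{B}_t$ is the Bogovskii operator on the annulus $B(\vc{h}_{n,\ep}(t),2r_\ep)\setminus B(\vc{h}_{n,\ep}(t),r_\ep)$ (its argument has zero mean), extended by zero. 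Then $\Div\Phi^{\vc{a}}_n=0$, $\Phi^{\vc{a}}_n\equiv\vc{a}$ near $B(\vc{h}_{n,\ep}(t),r_\ep)$, and $\Ds\Phi^{\vc{a}}_n$ vanishes on a neighbourhood of every ball, so $\Phi^{\vc{a}}_n$ is admissible in \eqref{P23}. Using the scale invariance of the Bogovskii constant on the annulus one checks, uniformly in $t\in[0,\tau^\ast]$ and $\ep$,
\[
\|\Phi^{\vc{a}}_n\|_{L^\infty}\aleq1,\qquad \|\Phi^{\vc{a}}_n\|_{L^2}\aleq r_\ep^{3/2},\qquad \|\Grad\Phi^{\vc{a}}_n\|_{L^2}\aleq r_\ep^{1/2};
\]
since $\Phi^{\vc{a}}_n(t,\cdot)$ is a translate of a fixed profile, $\partial_t\Phi^{\vc{a}}_n=-(\vc{h}'_{n,\ep}(t)\cdot\nabla)\Phi^{\vc{a}}_n$ and thus $\|\partial_t\Phi^{\vc{a}}_n(t,\cdot)\|_{L^2}\aleq|\vc{h}'_{n,\ep}(t)|\,r_\ep^{1/2}$. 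Replacing $\vc{a}$ by $\vc{b}\wedge(x-\vc{h}_{n,\ep}(t))$, $|\vc{b}|=1$, yields $\Phi^{\vc{b}}_n$ with $\Phi^{\vc{b}}_n\equiv\vc{b}\wedge(x-\vc{h}_{n,\ep}(t))$ near $B(\vc{h}_{n,\ep}(t),r_\ep)$ and $\|\Phi^{\vc{b}}_n\|_{L^2}\aleq r_\ep^{5/2}$, $\|\Grad\Phi^{\vc{b}}_n\|_{L^2}\aleq r_\ep^{3/2}$, $\|\partial_t\Phi^{\vc{b}}_n(t,\cdot)\|_{L^2}\aleq|\vc{h}'_{n,\ep}(t)|\,r_\ep^{3/2}$.

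For $\tau\in(0,\tau^\ast]$ I would use $\psi\,\Phi^{\vc{a}}_n$, with $\psi\to\ind_{[0,\tau]}$, as test function in \eqref{P23} (the Lipschitz--in--$t$ dependence being removed by a routine density argument based on \eqref{P20}). On the body $\vre=\vr_{\ep,S}$, $\vue=\vc{w}_{n,\ep}$, $\Phi^{\vc{a}}_n=\vc{a}$, so the inertial and the initial contributions reduce to $m_{n,\ep}\,\vc{h}'_{n,\ep}(\tau)\cdot\vc{a}$ and $m_{n,\ep}\,\vc{h}'_{n,\ep}(0)\cdot\vc{a}$ with $m_{n,\ep}=\vr_{\ep,S}|B(0,r_\ep)|$ (the rotational part of $\vc{w}_{n,\ep}$ has zero mean over the ball), each up to a collar error which, by \eqref{P25}, is $\aleq\|\Phi^{\vc{a}}_n\|_{L^2}\aleq r_\ep^{3/2}$; on the body the convective, viscous and time--derivative terms vanish because $\Grad\Phi^{\vc{a}}_n=\Ds\Phi^{\vc{a}}_n=\partial_t\Phi^{\vc{a}}_n=0$ there, and on the collar, using \eqref{P25}, Korn's inequality and the Sobolev embedding $W^{1,2}_0(\Omega)\hookrightarrow L^4(\Omega)$,
\[
\int_0^\tau\!\intO{\big(|\Ds\vue:\Ds\Phi^{\vc{a}}_n|+\vre|\vue|^2\,|\Grad\Phi^{\vc{a}}_n|\big)}\dt\aleq r_\ep^{1/2},\qquad \int_0^\tau\!\intO{\vre|\vue|\,|\partial_t\Phi^{\vc{a}}_n|}\dt\aleq r_\ep^{1/2}\,\|\vc{h}'_{n,\ep}\|_{L^\infty(0,\tau^\ast)}.
\]
Taking the supremum over unit $\vc{a}$ and over $\tau\le\tau^\ast$ gives $m_{n,\ep}\|\vc{h}'_{n,\ep}-\vc{h}'_{n,\ep}(0)\|_{L^\infty(0,\tau^\ast)}\aleq r_\ep^{1/2}\big(1+\|\vc{h}'_{n,\ep}\|_{L^\infty(0,\tau^\ast)}\big)$; writing $\|\vc{h}'_{n,\ep}\|_{L^\infty}\le|\vc{h}'_{n,\ep}(0)|+\|\vc{h}'_{n,\ep}-\vc{h}'_{n,\ep}(0)\|_{L^\infty}$, invoking \eqref{H1}, and absorbing $r_\ep^{1/2}\|\vc{h}'_{n,\ep}-\vc{h}'_{n,\ep}(0)\|_{L^\infty}$ on the left (legitimate once $r_\ep^{1/2}/m_{n,\ep}\le\tfrac12$, i.e. for $\ep<\bar\ep$) gives $\|\vc{h}'_{n,\ep}-\vc{h}'_{n,\ep}(0)\|_{L^\infty(0,\tau^\ast)}\aleq r_\ep^{1/2}/m_{n,\ep}$, in particular $\|\vc{h}'_{n,\ep}\|_{L^\infty(0,\tau^\ast)}\aleq1$. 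The same computation with $\Phi^{\vc{b}}_n$ — where the convective and time--derivative body contributions now vanish because $\vc{z}\cdot(A\vc{z})=0$ and $\vc{h}'_{n,\ep}\cdot(\vc{b}\wedge\vc{h}'_{n,\ep})=0$ for the antisymmetric matrix $A=\Grad(\vc{b}\wedge\,\cdot\,)$ — produces $J_{n,\ep}\|\bfomega_{n,\ep}-\bfomega_{n,\ep}(0)\|_{L^\infty(0,\tau^\ast)}\aleq r_\ep^{3/2}$ with $J_{n,\ep}=\tfrac{2}{5}m_{n,\ep}r_\ep^2$, i.e. $r_\ep\|\bfomega_{n,\ep}-\bfomega_{n,\ep}(0)\|_{L^\infty(0,\tau^\ast)}\aleq r_\ep^{1/2}/m_{n,\ep}$; adding the two estimates gives \eqref{bound:velocity} on $[0,\tau^\ast]$.

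Finally I rule out $\tau^\ast<T$: by the bound just proved and \eqref{H1}, $\sup_n\|\vc{h}'_{n,\ep}\|_{L^\infty(0,\tau^\ast)}\le\sup_n|\vc{h}'_{n,\ep}(0)|+C\sup_n r_\ep^{1/2}/m_{n,\ep}=\ep\,\delta(\ep)$ with $\delta(\ep)\to0$ (the first summand is $o(\ep)$; the second equals $\ep\cdot\sup_n r_\ep^{1/2}/(\ep m_{n,\ep})=o(\ep)$), so $|\vc{h}_{n,\ep}(t)-\vc{h}_{n,\ep}(0)|\le T\ep\,\delta(\ep)$ on $[0,\tau^\ast]$; hence for $\ep<\bar\ep$ the separation inequalities defining $\tau^\ast$ hold there with strict inequality, which by continuity forces $\tau^\ast=T$, and \eqref{bound:velocity} holds on $[0,T]$. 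The step I expect to be the main obstacle is the quantitative bookkeeping linking the scale--dependent test--function bounds — in particular the uniformity of the Bogovskii constant on the shrinking annulus and the exact powers of $r_\ep$ — with the self--interaction term $\int\vre\vue\cdot\partial_t\Phi_n$, whose prefactor $r_\ep^{1/2}$ must be dominated by $m_{n,\ep}$ for the absorption to close while the no--collision property is propagated simultaneously; this is precisely what forces the two smallness requirements in \eqref{H1}.
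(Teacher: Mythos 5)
Your proposal is correct and follows essentially the same route as the paper: test the momentum balance \eqref{P23} with divergence--free fields that are compactly supported near each body and coincide there with a rigid velocity, estimate the collar contributions in powers of $r_\ep$, absorb the self--interaction term using \eqref{H1}, and then propagate the $\ep$--separation of the bodies on $[0,T]$. The only (cosmetic) differences are that the paper obtains the divergence--free test fields directly as curls of cut--off stream functions (so no Bogovskii operator is needed), uses a time weight $\int_t^T\gamma$ in place of your smoothed indicator, and closes the no--collision step by contradiction rather than by your $\tau^\ast$--continuation argument.
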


\begin{proof}[Proof of Theorem \ref{TP2}]
The boundedness of the initial energy implies that the velocity field $\vue$ satisfies the 
uniform bound \eqref{P28}. Moreover, assumption \eqref{P27} together with $\frac{r_\ep}{\ep^3} \to 1 \ \mbox{as}\ \ep \to 0$ imply \eqref{H1}. Thus, we have verified the hypothesis of Proposition \ref{Impr:contr}. We deduce from \eqref{bound:velocity} and \eqref{P27} that
\begin{equation}
\label{last:Est}
	\sup_{n=1, \dots N(\ep)} \frac{\|\vc{h}'_{n,\ep}\|_{L^\infty(0,T)}}{\ep^2} \to 0,\ \mbox{as}\ \ep \to 0. 
\end{equation} 
Furthermore, from the bound  $\intO{ \vre |\vue|^2 } \aleq 1$, we have
\begin{equation}\label{bdd:KE}
  \int_{B(\vc{h}_{n,\ep}, r_\ep)} \vr_{\ep, S}| \vc{w}_{n, \ep} |^2 \dx  \aleq 1 
\ \mbox{for any}\ n=1,\dots, N(\ep). 
\end{equation}
Moreover, we have \begin{equation*}
 \int_{B(\vc{h}_{n,\ep}, r_\ep)} \vr_{\ep, S} | \vc{w}_{n, \ep} |^2 =m_{\ep} |\vc{h}'_{n,\ep} |^2 + J_{\ep} |{\bfomega}_{n,\ep}|^2= m_{\ep} |\vc{h}'_{n,\ep} |^2 + \frac{2}{5}m_{\ep} r_{\ep}^2 |{\bfomega}_{n,\ep}|^2 \geq \frac{2}{5}m_{\ep} r_{\ep}^2 |{\bfomega}_{n,\ep}|^2,
\end{equation*}
where $J_{\ep}$ is the moment of inertia of $B(\vc{h}_{n,\ep}(t), r_\ep)$. 

As $\frac{r_\ep}{\ep^3} \to 1 \ \mbox{as}\ \ep \to 0$ and $\vr_{\ep, S}= \frac{m_{\ep}}{r_{\ep}^3}$, we get $m_{\ep}\approx \vr_{\ep, S} \ep^9$. Thus we obtain from \eqref{bdd:KE} that
\begin{equation*}
\vr_{\ep, S} \ep^9 r_{\ep}^2 |{\bfomega}_{n,\ep}|^2 \lesssim 1 \implies r_{\ep}^2 |{\bfomega}_{n,\ep}|^2 \lesssim \frac{1}{\vr_{\ep, S}\ep^9}= \frac{\ep^{1/2}}{\vr_{\ep, S}\ep^{19/2}}.
\end{equation*}
Hence, 
\begin{equation}\label{est:romega}
	\sup_{n=1, \dots N(\ep)} r_{\ep} \|\bfomega_{n,\ep}\|_{L^\infty(0,T)} \to 0 \ \mbox{as}\ \ep \to 0.
\end{equation}
Thus the relations \eqref{est:romega} and \eqref{last:Est} give us \eqref{P14}. Now we can apply Theorem \ref{TP1} in order to deduce Theorem \ref{TP2}.

\end{proof}

\section{Homogenization of the stationary Stokes problem}
\label{k}

 We collect some known results concerning homogenization of the \emph{stationary}
Stokes problem. The distributions of the particles is the same as in \eqref{P13a}. To simplify the notation,  
we write $\vc{h}_{n,\ep}$ instead of $\vc{h}_{n, \ep}(0)$. Similarly, 
we write $\Omega_\ep$ instead of $\Omega_{\ep,0}$.

First, we claim that under the hypotheses of Theorem \ref{TP1} on the initial distribution of the obstacles, there exists a subsequence (not relabelled) such that
\begin{equation} \label{k5a}
\ep^3 \sum_{n=1}^{N(\ep)} \delta_{\vc{h}_{n,\ep}} 
\to \mathfrak{R} \ \mbox{weakly-(*) in}\ \mathfrak{M}(\Ov{\Omega}), 
\end{equation}
where $\mathfrak{R} \in L^\infty(\Omega)$. Indeed, in view of hypothesis \eqref{P6}, we have 
$\ep^3 N(\ep) \aleq |\Omega|$, in particular, the limit \eqref{k5a} exists for a suitable subsequence. Moreover, as any cube of the side $\ep$ can contain at most one point $\vc{h}_{n, \ep}$, we conclude 
\[
\left< \ep^3 \sum_{n=1}^{N(\ep)} \delta_{\vc{h}_{n,\ep}} , \phi \right> \equiv 
\ep^3 \sum_{n=1}^{N(\ep)} \phi (\vc{h}_{n, \ep}) \aleq \int_{\Ov{\Omega}} \phi(x) \ \dx \ \mbox{for any}\ \phi \in C(\Ov{\Omega}), 
\]	 
which yields $\mathfrak{R}$ is bounded absolutely continuous with respect to the Lebesgue measure. This provides the claimed implication \eqref{P13} in Theorem~\ref{TP1}

\subsection{Stokes problem}

Consider the Stokes problem 
\begin{align}
\Del \vc{v}_\ep + \Grad p_\ep &= \vc{f}_\ep  ,\ \Div \vc{v}_\ep = 0 
\ \mbox{in}\ \Omega_\ep \br 
\vc{v}_\ep|_{\partial \Ome} &= 0.\ 
\label{k3}
\end{align}

\begin{Proposition}\label{Pk1}{\bf(Homogenization of Stokes problem.)}
	
	Let $\Omega \subset R^3$ be a bounded domain of class at least $C^{2 + \nu}$.
	Let the particles and their positions satisfy \eqref{P1a}, \eqref{P6}.
	Finally, let
		\begin{equation} \label{k4} 
			\vc{f}_\ep \to \vc{f} \ \mbox{in}\ W^{-1,2}(\Omega; R^3), 
			\end{equation}
			
			\begin{equation} \label{k5}
			\ep^3	\sum_{n=1}^{N(\ep)} \delta_{\vc{h}_{n,\ep}} 
			\to \mathfrak{R} \   \mbox{weakly-(*) in}\ \mathfrak{M}(\Ov{\Omega}) 
			\end{equation}
as $\ep \to 0$, where the limit $\mathfrak{R}$ coincides with an $L^\infty$ function on $\Omega$.
Then the Stokes problem \eqref{k3} admits a unique solution $(\vc{v}_\ep, p_\ep)$,  
and
\begin{equation} \label{k6}		
\vc{v}_\ep \to \vc{v} \ \mbox{weakly in}\ W^{1,2}_0(\Omega; R^3),
\end{equation}
where 
$\vc{v}$ is the unique solution of the Stokes problem 
\begin{align} 
\Del \vc{v} + \Grad p &= \vc{f} + 6\pi \mathfrak{R} \vc{v} \ \mbox{in} \ \Omega, \br 
\vc{v}|_{\partial \Omega} &= 0. \label{k7}
\end{align}
\end{Proposition}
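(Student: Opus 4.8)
The plan is to run the classical oscillating--test--function argument producing the Cioranescu--Murat ``strange term''; the scaling $r_\ep/\ep^3 \to 1$ in \eqref{P1a} is precisely the critical one making a friction term survive in the limit, and the outcome can be cross-checked against Marchenko, Khruslov \cite{MarKhr}, Desvilletes, Golse, Ricci \cite{DesGolRic}, and Feireisl, Namlyeyeva, Ne\v{c}asov\'{a} \cite{FeiNamNec}. First I would settle solvability and compactness: for fixed $\ep$ the perforated domain $\Omega_\ep$ is bounded and Lipschitz, so \eqref{k3} has a unique weak solution, the velocity $\vc{v}_\ep \in W^{1,2}_0(\Omega_\ep;R^3)$ with $\Div \vc{v}_\ep = 0$ by Lax--Milgram on divergence-free fields, and the pressure $p_\ep$ (modulo constants) by the inf--sup condition. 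Extending $\vc{v}_\ep$ by zero to $\Omega$ and testing by itself gives $\|\Grad \vc{v}_\ep\|_{L^2(\Omega)}^2 = -\langle \vc{f}_\ep, \vc{v}_\ep \rangle$, whence $\|\vc{v}_\ep\|_{W^{1,2}_0(\Omega;R^3)} \aleq 1$ uniformly in $\ep$ by \eqref{k4}. Along a subsequence, $\vc{v}_\ep \to \vc{v}$ weakly in $W^{1,2}_0(\Omega;R^3)$ and, by Rellich, strongly in $L^2(\Omega;R^3)$, with $\Div \vc{v} = 0$.

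The heart of the proof is the construction of oscillating test functions. A fixed divergence-free $\bfphi \in C^\infty_c(\Omega;R^3)$ is not admissible in \eqref{k3}, so I would correct it to $\bfphi_\ep \in W^{1,2}_0(\Omega;R^3)$ with $\Div \bfphi_\ep = 0$, $\bfphi_\ep = 0$ on each $B(\vc{h}_{n,\ep}, r_\ep)$ and $\bfphi_\ep = \bfphi$ outside $\cup_n B(\vc{h}_{n,\ep}, \ep/2)$, by superposing single-ball correctors: on each annulus $B(\vc{h}_{n,\ep},\ep/2) \setminus B(\vc{h}_{n,\ep}, r_\ep)$ one solves a Stokes problem interpolating between the frozen value $\bfphi(\vc{h}_{n,\ep})$ on the outer sphere and $0$ on the inner one, restoring the divergence constraint by a Bogovskii correction. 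Since the balls are $2\ep$-separated, stay at distance $> \ep$ from $\partial\Omega$ by \eqref{P6}, and $r_\ep/\ep \sim \ep^2 \to 0$, the freezing and overlap errors are summable over the $N(\ep) \aleq \ep^{-3}$ obstacles, so $\bfphi_\ep \to \bfphi$ strongly in $L^2$ and weakly in $W^{1,2}$. The decisive property, which I would extract from the single-sphere Stokes resistance formula (drag $6\pi r_\ep U$ at unit viscosity) together with $r_\ep \sum_n \delta_{\vc{h}_{n,\ep}} = \frac{r_\ep}{\ep^3}\, \ep^3 \sum_n \delta_{\vc{h}_{n,\ep}} \to \mathfrak{R}$ (by \eqref{k5}, \eqref{P1a}), is that whenever $\vc{z}_\ep \to \vc{z}$ weakly in $W^{1,2}_0(\Omega;R^3)$ and strongly in $L^2(\Omega;R^3)$, $\Div \vc{z}_\ep = 0$, and $\vc{z}_\ep = 0$ on all balls, then $\int_\Omega \Grad \vc{z}_\ep : \Grad(\bfphi_\ep - \bfphi) \ \dx \to 6\pi \int_\Omega \mathfrak{R}\, \vc{z} \cdot \bfphi \ \dx$; this is obtained by integrating by parts against the Stokes cell equation, whose residual is a measure supported on the spheres $\partial B(\vc{h}_{n,\ep}, r_\ep)$ of the right total mass, and using the strong $L^2$ convergence of $\vc{z}_\ep$.

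Given this, the passage to the limit is routine. Testing \eqref{k3} by $\bfphi_\ep$ yields $-\int_\Omega \Grad \vc{v}_\ep : \Grad \bfphi_\ep \ \dx = \langle \vc{f}_\ep, \bfphi_\ep \rangle$; splitting $\bfphi_\ep = \bfphi + (\bfphi_\ep - \bfphi)$, the right-hand side tends to $\langle \vc{f}, \bfphi \rangle$, the term $\int_\Omega \Grad \vc{v}_\ep : \Grad \bfphi \ \dx$ tends to $\int_\Omega \Grad \vc{v} : \Grad \bfphi \ \dx$ by weak convergence, and the previous step applied with $\vc{z}_\ep = \vc{v}_\ep$ treats the remainder. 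One obtains $\int_\Omega \Grad \vc{v} : \Grad \bfphi \ \dx + 6\pi \int_\Omega \mathfrak{R}\, \vc{v} \cdot \bfphi \ \dx = -\langle \vc{f}, \bfphi \rangle$ for every divergence-free $\bfphi \in C^\infty_c(\Omega;R^3)$, which by de Rham furnishes a pressure $p$ and the Brinkman--Stokes system \eqref{k7}. Since $\mathfrak{R} \ge 0$, the associated bilinear form is coercive on $W^{1,2}_0(\Omega;R^3)$, so \eqref{k7} is uniquely solvable; uniqueness of the limit then upgrades the subsequential convergence to convergence of the whole family.

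The hard part will be the corrector analysis of the second step: assembling the $N(\ep) \to \infty$ single-ball contributions and showing that the concentrated residual measures converge \emph{exactly} to $6\pi \mathfrak{R}$, rather than to a density distorted by the mutual interaction of the obstacles or by their proximity to $\partial\Omega$. This is precisely where the criticality $r_\ep \sim \ep^3$ and the dilution and boundary-distance hypotheses \eqref{P6} are indispensable: they force the overlap of the $\ep$-cells, the localisation of the cell pressures, and the freezing of $\bfphi$ to produce errors that are summable over all obstacles and vanish as $\ep \to 0$.
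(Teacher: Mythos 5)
The paper does not prove this proposition at all; it is stated and then discharged to the literature with the sentence ``For the proof see Desvillettes, Golse, and Ricci \cite[Theorem 1]{DesGolRic}'', together with the further references to Marchenko--Khruslov \cite[Chapter 4, Theorem 4.7]{MarKhr} and \cite[Proposition 5.1]{FeiNamNec}. So your proposal is not competing with an in-paper argument: you are reconstructing what the cited works actually do. Your route --- Lax--Milgram plus energy bound, oscillating test functions built from single-ball Stokes correctors with frozen boundary data and a Bogovskii divergence fix, and identification of the Cioranescu--Murat strange term via the Stokes drag $6\pi r_\ep$ and the scaling $r_\ep/\ep^3\to 1$ --- is the standard Allaire-type corrector argument and is consistent in spirit with the cited references (Desvillettes--Golse--Ricci phrase it through empirical measures and a method-of-reflections comparison, Marchenko--Khruslov and Allaire through correctors, but the mechanism is the same). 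Your overall plan is sound and correctly identifies where the hypotheses \eqref{P1a}, \eqref{P6} enter.

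One genuine imprecision worth flagging: you claim the residual $-\Delta_x(\bfphi_\ep-\bfphi)+\nabla_x q_\ep$ is a measure supported on the \emph{inner} spheres $\partial B(\vc{h}_{n,\ep},r_\ep)$ carrying mass $6\pi r_\ep$, and that the pairing with $\vc{z}_\ep$ is then controlled by the strong $L^2$ convergence of $\vc{z}_\ep$. But $\vc{z}_\ep$ vanishes on the closed balls $B(\vc{h}_{n,\ep},r_\ep)$, hence on those spheres, so a surface measure supported there paired with $\vc{z}_\ep$ is identically zero and cannot produce the friction term. In the actual argument one integrates by parts in each annulus $B(\vc{h}_{n,\ep},\ep/2)\setminus B(\vc{h}_{n,\ep},r_\ep)$: the inner boundary term dies precisely because $\vc{z}_\ep=0$ there, the bulk term is killed by the cell Stokes equation and $\Div\vc{z}_\ep=0$, and the nonvanishing contribution comes from the \emph{outer} boundary $\partial B(\vc{h}_{n,\ep},\ep/2)$ (equivalently, from the cutoff region), where the normal stress of the decaying Stokeslet still carries the total drag $6\pi r_\ep\,\bfphi(\vc{h}_{n,\ep})$ up to $o(r_\ep)$. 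With that relocation the rest of your argument, including the use of $r_\ep\sum_n\delta_{\vc{h}_{n,\ep}}\to\mathfrak{R}$ and the strong $L^2$ compactness of $\vc{z}_\ep$, goes through as you describe.
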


For the proof see Desvillettes, Golse, and Ricci \cite[Theorem 1]{DesGolRic}. A more general setting 
with obstacles of different radii and mutual distances was treated in the monograph \cite[Chapter 4, Theorem 4.7]{MarKhr}. Obstacles of arbitrary shape 
were considered in \cite[Proposition 5.1]{FeiNamNec}.

\subsection{Perturbation of the boundary velocity}
\label{p}

We extend Proposition \ref{Pk1} to functions that do not necessarily vanish on $\partial \Ome$.

\begin{Proposition}[{\bf Boundary velocity}] \label{Pp1}
	
Under the hypotheses of Proposition \ref{Pk1}, suppose that $(\vc{v}_\ep, p_\ep)$ is the unique solution of the Stokes problem
	\begin{align}
		\Del \vc{v}_\ep + \Grad p_\ep &= \vc{f}_\ep  ,\ \Div \vc{v}_\ep = 0 
		\ \mbox{in}\ \Omega_\ep \br 
		\vc{v}_\ep|_{\partial \Ome} &= \vc{w}_\ep|_{\partial \Ome}, 
		\label{p4}
	\end{align}
where 
\[
\Div \vc{w}_\ep = 0,\ \vc{w}_\ep \to 0 \ \mbox{in}\ W^{1,2}_0(\Omega; R^3).
\] 	 
	
	Then 
	\begin{equation} \label{p5}		
		\vc{v}_\ep \to \vc{v} \ \mbox{weakly in}\ W^{1,2}_0(\Omega; R^3),
	\end{equation}
	where 
	$\vc{v}$ is the unique solution of the Stokes problem 
	\begin{align} 
		\Del \vc{v} + \Grad p &= \vc{f} + 6\pi \mathfrak{R} \vc{v} \ \mbox{in} \ \Omega, \br 
		\vc{v}|_{\partial \Omega} &= 0. \label{p6}
	\end{align}
\end{Proposition}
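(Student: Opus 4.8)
The plan is to reduce Proposition~\ref{Pp1} to Proposition~\ref{Pk1} by subtracting off the boundary data. First I would fix, for each $\ep$, a solenoidal extension of the boundary velocity: since $\vc{w}_\ep \in W^{1,2}_0(\Omega;R^3)$ is already globally defined on $\Omega$ with $\Div \vc{w}_\ep = 0$, there is nothing to construct — one simply sets $\vc{z}_\ep := \vc{v}_\ep - \vc{w}_\ep$, so that $\vc{z}_\ep|_{\partial\Ome} = 0$ (because on $\partial\Omega$ both vanish, and on $\partial B_{n,\ep}$ we have $\vc{v}_\ep = \vc{w}_\ep$ by \eqref{p4}), and $\Div \vc{z}_\ep = 0$ in $\Omega_\ep$. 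Plugging $\vc{v}_\ep = \vc{z}_\ep + \vc{w}_\ep$ into \eqref{p4} gives the Stokes system
\begin{align}
\Del \vc{z}_\ep + \Grad p_\ep &= \vc{f}_\ep - \Del \vc{w}_\ep =: \tilde{\vc{f}}_\ep, \quad \Div \vc{z}_\ep = 0 \ \mbox{in}\ \Omega_\ep, \nonumber \\
\vc{z}_\ep|_{\partial\Ome} &= 0. \nonumber
\end{align}
Since $\vc{w}_\ep \to 0$ in $W^{1,2}_0(\Omega;R^3)$, we have $\Del \vc{w}_\ep \to 0$ in $W^{-1,2}(\Omega;R^3)$, hence $\tilde{\vc{f}}_\ep \to \vc{f}$ in $W^{-1,2}(\Omega;R^3)$; all hypotheses of Proposition~\ref{Pk1} are met for $(\vc{z}_\ep,p_\ep)$ with forcing $\tilde{\vc{f}}_\ep$.

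Applying Proposition~\ref{Pk1} then yields $\vc{z}_\ep \to \vc{z}$ weakly in $W^{1,2}_0(\Omega;R^3)$, where $\vc{z}$ solves the Brinkman--Stokes problem $\Del \vc{z} + \Grad p = \vc{f} + 6\pi\mathfrak{R}\vc{z}$ in $\Omega$ with $\vc{z}|_{\partial\Omega}=0$. Since $\vc{v}_\ep = \vc{z}_\ep + \vc{w}_\ep$ and $\vc{w}_\ep \to 0$ strongly in $W^{1,2}_0$, we get $\vc{v}_\ep \to \vc{z}$ weakly in $W^{1,2}_0(\Omega;R^3)$, so the limit $\vc{v}$ of $\vc{v}_\ep$ equals $\vc{z}$ and solves exactly \eqref{p6}. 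Uniqueness of $\vc{v}_\ep$ for the perturbed problem \eqref{p4} follows from uniqueness for \eqref{k3} (itself part of Proposition~\ref{Pk1}) via the same translation $\vc{v}_\ep \mapsto \vc{v}_\ep - \vc{w}_\ep$; uniqueness of the limit $\vc{v}$ is standard for the Brinkman--Stokes problem (the bilinear form $(\Grad\vc{a},\Grad\vc{b}) + 6\pi(\mathfrak{R}\vc{a},\vc{b})$ is coercive on $W^{1,2}_0$ since $\mathfrak{R}\ge 0$), which also identifies the whole sequence rather than just a subsequence.

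Honestly, this proposition is essentially a bookkeeping corollary of Proposition~\ref{Pk1}, so there is no serious obstacle; the only point requiring a moment's care is checking that the decomposition $\vc{z}_\ep = \vc{v}_\ep - \vc{w}_\ep$ is legitimate as an element of $W^{1,2}_0(\Omega;R^3)$ vanishing on the obstacles' boundaries — i.e., that the no-slip condition \eqref{p4} on $\partial B_{n,\ep}$ and the given boundary condition $\vc{v}_\ep|_{\partial\Ome} = \vc{w}_\ep|_{\partial\Ome}$ are used consistently — and that the convergence $\vc{w}_\ep \to 0$ in $W^{1,2}_0$ really does kill the extra forcing term $\Del \vc{w}_\ep$ in the $W^{-1,2}$ topology required by \eqref{k4}. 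I would also remark that no compatibility of $\vc{w}_\ep$ with the obstacle geometry is needed beyond $\vc{w}_\ep|_{\partial B_{n,\ep}}$ matching $\vc{v}_\ep$, which holds by definition of the Stokes problem \eqref{p4}; thus the statement is robust and the proof is short.
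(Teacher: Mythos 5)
Your proof is correct and is exactly the paper's approach: the published proof consists of the single line ``Apply Proposition~\ref{Pk1} to the sequence $\vc{v}_\ep - \vc{w}_\ep$'', and your write-up simply fills in the bookkeeping (that $\vc{z}_\ep$ vanishes on both $\partial\Omega$ and the obstacle boundaries, that $\Del\vc{w}_\ep\to 0$ in $W^{-1,2}$ so the shifted forcing still satisfies \eqref{k4}, and that adding back the strongly vanishing $\vc{w}_\ep$ does not change the weak limit).
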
 

\begin{proof}
Apply Proposition \ref{Pk1} to the sequence 
$
\vc{v}_\ep - \vc{w}_\ep.
$
	
	\end{proof}

\section{Change of variables}
\label{v}

Our goal is to introduce a suitable change of variables to transform the problem to a fixed domain, specifically, mapping the trajectories $t \mapsto \vc{h}_{n,\ep}(t)$ to straight lines. To this end, we introduce a suitable cut--off function 
\[
\chi \in C^\infty_c [0, 1),\ \chi(z) = \left\{ \begin{array}{l} 1 
\ \mbox{if} \ 0 \leq z < \frac{1}{2}, \\ 
\in [0,1] \ \mbox{if} \ \frac{1}{2} \leq z \leq \frac{3}{4}, \\ 
0 \ \mbox{if}\ z > \frac{3}{4},	 
	\end{array} \right. 
\]
together with the vector field 
\begin{equation} \label{v1}
	\Lambda_\ep (t, x) = \Curl \left[ \sum_{n=1}^{N(\ep)} 
\chi \left( \frac{| x - \vc{h}_{n,\ep}(0) |}{\ep} \right) 	
	\mathbb{T}\Big( x - \vc{h}_{n,\ep}(0) \Big) \cdot \vc{h}'_{n,\ep}(t)   \right],  
\end{equation}	
where
\begin{equation}
\label{def:T}
\mathbb{T}(x) = \left( \begin{array}{ccc}
0  & x_3 & -x_2 \\ -x_3  &0  &x_1 \\  x_2 &-x_1 &0             \end{array} \right),  
\end{equation}	
see Cumsille and Takahashi \cite{CumTak}.

The following properties of $\Lambda_\ep$ can be verified by a direct calculation. 
\begin{itemize}
	\item 
$\Lambda_\ep (t, \cdot) \in C^\infty (R^3; R^3)$ for a.a. $t \in [0,T]$.
\item 
$\Div  	\Lambda_\ep(t, \cdot) = 0$ for a.a. $t \in [0,T]$.
\item  Under the hypotheses of Theorem \ref{TP1}, notably \eqref{P14}, 
\begin{equation} \label{v2}
\frac{ \| D^k_x \Lambda_\ep \|_{L^\infty((0,T) \times R^3; R^{3(k+1)} )}}{\ep^{2 - k}} \to 0 \ \mbox{as}\ \ep \to 0,\ k=0,1,2. 
	\end{equation}
\end{itemize}	

In the definition of $ \Lambda_{\ep}$, we consider rigid velocity that have zero angular velocities. Actually any rigid velocities admit an extension onto $\Omega$ similar to \eqref{v1}. In the following lemma we present the extension. Moreover we show a bound in $W^{1,2}_0(\Omega)$ which we will use to apply Theorem \ref{Pp1}.
\begin{Lemma} \label{LC1}
	Let a family of rigid velocities 
	\[
	\vc{w}_{n,\ep} = \vc{h}'_{n,\ep} + \bfomega_{n,\ep} \wedge (y - \vc{h}_{n,\ep}(0))
	\ \mbox{on}\ B(\vc{h}_{n,\ep}(0), r_\ep ),\ n=1,\dots, N(\ep)
	\]
be given. Then there exists a velocity field $\vc{w}_\ep \in W^{1,2}_0(\Omega; R^3)$ enjoying the following properties: 
		\[
		\Div \vc{w}_\ep  = 0 \ \mbox{in} \ \Omega;
		\]
		
		\[
	\vc{w}_\ep = \vc{w}_{n,\ep} \ \mbox{on}\ B(\vc{h}_{n,\ep} (0), r_\ep),\ 
	n=1,\dots, N(\ep);
		\]
		
		\[
		\| \vc{w}_\ep \|_{W^{1,2}_0(\Omega)} \aleq \left( \max_{n=1, \dots, N(\ep) } | \vc{h}'_{n,\ep} | + 
		r_{\ep} \max_{n=1, \dots, N(\ep) } |\bfomega_n |  \right).
		\]

\end{Lemma}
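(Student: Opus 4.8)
The plan is to construct $\vc{w}_\ep$ explicitly as a sum of localized, divergence-free extensions of the individual rigid velocities, one per particle, using the same $\Curl$-of-cutoff trick as in \eqref{v1} but now also accounting for the angular part. For each $n$, set
\[
\vc{w}_{n,\ep}^{\rm ext}(y) = \Curl\!\left[ \chi\!\left( \frac{|y - \vc{h}_{n,\ep}(0)|}{\ep} \right) \mathbb{T}\big(y - \vc{h}_{n,\ep}(0)\big)\cdot \vc{h}'_{n,\ep} \right] + \Curl\!\left[ \chi\!\left( \frac{|y - \vc{h}_{n,\ep}(0)|}{\ep} \right) \vc{a}_{n,\ep}(y) \right],
\]
where the second potential $\vc{a}_{n,\ep}$ is chosen so that, near $B(\vc{h}_{n,\ep}(0),r_\ep)$, the curl reproduces the rotation field $\bfomega_{n,\ep}\wedge(y-\vc{h}_{n,\ep}(0))$; concretely one may take $\vc{a}_{n,\ep}(y) = -\tfrac12 |y-\vc{h}_{n,\ep}(0)|^2\, \bfomega_{n,\ep}$, since $\Curl\big(-\tfrac12|z|^2\bfomega\big) = \bfomega\wedge z$. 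On $B(\vc{h}_{n,\ep}(0),r_\ep) \subset \{\chi \equiv 1\}$ (valid because $r_\ep \ll \ep$ by \eqref{P1a}), the cutoff is constant and hence $\vc{w}_{n,\ep}^{\rm ext} = \vc{w}_{n,\ep}$ there. Each term is a curl, so $\Div \vc{w}_{n,\ep}^{\rm ext} = 0$ automatically, and it is supported in the ball $B(\vc{h}_{n,\ep}(0),\tfrac34\ep)$. We then put $\vc{w}_\ep = \sum_{n=1}^{N(\ep)} \vc{w}_{n,\ep}^{\rm ext}$.

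The supports are pairwise disjoint: by \eqref{P6} the centres are more than $2\ep$ apart, so the balls $B(\vc{h}_{n,\ep}(0),\tfrac34\ep)$ do not overlap; likewise $\dist[\vc{h}_{n,\ep}(0);\partial\Omega] > \ep$ guarantees each support stays inside $\Omega$, so $\vc{w}_\ep \in W^{1,2}_0(\Omega;R^3)$ and the compatibility identity $\vc{w}_\ep = \vc{w}_{n,\ep}$ on $B(\vc{h}_{n,\ep}(0),r_\ep)$ holds termwise. For the norm bound, disjointness of supports means
\[
\| \vc{w}_\ep \|_{W^{1,2}_0(\Omega)}^2 = \sum_{n=1}^{N(\ep)} \| \vc{w}_{n,\ep}^{\rm ext} \|_{W^{1,2}_0(\Omega)}^2,
\]
so it suffices to estimate a single term. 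A direct computation of the curl produces terms where the $\ep^{-1}$ (resp. $\ep^{-2}$ for the gradient) from differentiating $\chi(\cdot/\ep)$ multiplies the polynomial factor $\mathbb{T}(z)$ or $|z|^2$, which on the support $|z| \lesssim \ep$ is of size $\ep$ (resp. $\ep^2$); and the volume of the support is $\lesssim \ep^3$. Tracking powers: the translational part contributes $\|\nabla \vc{w}_{n,\ep}^{\rm ext}\|_{L^\infty}^2 \lesssim \ep^{-2}|\vc{h}'_{n,\ep}|^2$ times volume $\ep^3$, i.e.\ $\lesssim \ep |\vc{h}'_{n,\ep}|^2$, and the rotational part gives $\lesssim \ep\, r_\ep^2 |\bfomega_{n,\ep}|^2$ — wait, more carefully, $\vc{a}_{n,\ep}$ has size $\ep^2|\bfomega_{n,\ep}|$ on the support, its curl is $\lesssim \ep|\bfomega_{n,\ep}|$ pointwise, and differentiating the cutoff gives a further $\ep^{-1}$, so $\|\nabla \vc{w}_{n,\ep}^{\rm ext}\|_{L^\infty}^2 \lesssim |\bfomega_{n,\ep}|^2$, times volume $\ep^3$, is $\lesssim \ep^3 |\bfomega_{n,\ep}|^2 \lesssim \ep\, r_\ep^2\, \ep^{-2} \cdot \ep^2 |\bfomega_{n,\ep}|^2$; using $r_\ep \sim \ep^3$ this is comfortably controlled by $r_\ep^2|\bfomega_{n,\ep}|^2$ up to the factor $\ep^3 N(\ep)\lesssim|\Omega|$ after summation. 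Summing over $n$ and using $\ep^3 N(\ep) \lesssim |\Omega|$ (from \eqref{P6}, as in Remark \ref{dR1}) yields
\[
\| \vc{w}_\ep \|_{W^{1,2}_0(\Omega)}^2 \lesssim \ep^3 N(\ep)\left( \max_n |\vc{h}'_{n,\ep}|^2 + r_\ep^2 \max_n |\bfomega_{n,\ep}|^2 \right) \lesssim \left( \max_n |\vc{h}'_{n,\ep}| + r_\ep \max_n |\bfomega_{n,\ep}| \right)^2,
\]
which is the asserted estimate.

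The main obstacle is the bookkeeping of powers of $\ep$ in the rotational part: one must be careful that the natural scaling of the angular-velocity extension — where the physically meaningful quantity is $r_\ep|\bfomega_{n,\ep}|$, the boundary speed on the particle, rather than $\ep|\bfomega_{n,\ep}|$ — is correctly matched, and that the cutoff is chosen to localize at scale $\ep$ (the inter-particle scale) while the rigid field itself only needs to be reproduced at scale $r_\ep \ll \ep$. Choosing the potential to be genuinely quadratic (so its curl is exactly the affine rotation field, with no lower-order junk) keeps the computation clean; alternatively one could split $\vc{w}_{n,\ep} = \vc{h}'_{n,\ep} + \bfomega_{n,\ep}\wedge(\cdot)$ and note that the constant part is handled exactly by \eqref{v1} while only the genuinely $\ep$-dependent rotational correction needs separate treatment. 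The disjoint-support structure is what makes the summation lossless and is the reason \eqref{P6} is invoked; everything else is a routine Leibniz-rule computation.
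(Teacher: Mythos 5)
Your construction has the right shape---the same $\Curl$-of-cutoff ansatz as the paper, with the correct quadratic potential $-\tfrac12|z|^2\bfomega_{n,\ep}$ for the rotational part---but you localize at the wrong scale, and the resulting estimate does not close. You cut off at the inter-particle scale $\ep$, i.e.\ $\chi(|y-\vc{h}_{n,\ep}(0)|/\ep)$, whereas the paper cuts off at the particle scale $r_\ep$. This matters decisively. With your choice, your own per-particle computation gives $\|\nabla \vc{w}_{n,\ep}^{\rm ext}\|_{L^2}^2 \lesssim \ep\,|\vc{h}'_{n,\ep}|^2$ for the translational part; summing over $N(\ep)$ disjoint supports and using $N(\ep)\lesssim \ep^{-3}$ yields
\[
\|\nabla \vc{w}_\ep\|_{L^2}^2 \lesssim N(\ep)\,\ep\,\max_n |\vc{h}'_{n,\ep}|^2 \lesssim \ep^{-2}\max_n|\vc{h}'_{n,\ep}|^2,
\]
which blows up rather than being bounded by $\max_n|\vc{h}'_{n,\ep}|^2$. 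Your final line ``$\|\vc{w}_\ep\|^2 \lesssim \ep^3 N(\ep)(\dots)$'' silently replaces the per-term factor $\ep$ by $\ep^3$, which is an arithmetic error, not a harmless simplification. The rotational part is worse: you compute a per-term bound $\ep^3|\bfomega_{n,\ep}|^2$, which after summation gives $N(\ep)\ep^3\max_n|\bfomega_{n,\ep}|^2 \lesssim \max_n|\bfomega_{n,\ep}|^2$, and this is larger than the target $r_\ep^2\max_n|\bfomega_{n,\ep}|^2 \sim \ep^6 \max_n|\bfomega_{n,\ep}|^2$ by a factor of $\ep^{-6}$. The ``$\lesssim \ep\, r_\ep^2\,\ep^{-2}\cdot\ep^2|\bfomega_{n,\ep}|^2$'' chain you wrote goes the wrong way: $\ep^3 \gg \ep\, r_\ep^2 \sim \ep^7$.

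The fix is exactly the paper's: replace the cutoff argument $|y-\vc{h}_{n,\ep}(0)|/\ep$ by $|y-\vc{h}_{n,\ep}(0)|/(c\,r_\ep)$ for a suitable constant $c$, so each extension is supported at scale $r_\ep$ rather than $\ep$ (the supports are still disjoint since the centres are $>2\ep\gg r_\ep$ apart). Then the per-term estimate becomes $r_\ep|\vc{h}'_{n,\ep}|^2 + r_\ep^3|\bfomega_{n,\ep}|^2$, and $N(\ep)\,r_\ep \lesssim \ep^{-3}\ep^3 \lesssim 1$ gives exactly the claimed bound. The conceptual content is the same; only the scale of localization differs, but that choice is precisely what makes the final bookkeeping work.
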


\begin{proof}

Consider the velocity field
\begin{equation*}
\vc{w}_{\ep}(t,y) = \sum_{n=1}^{N(\ep)} \Curl \left[ \chi \left(2 \frac{|x- \vc{h}_{n,\ep} |}{ r_{\ep}} \right) 	
	\mathbb{T}( x- \vc{h}_{n,\ep}) \cdot  \vc{h}_{n,\ep}'(t) - \chi \left(2 \frac{|x- \vc{h}_{n,\ep} |}{ r_{\ep}} \right) 	\frac{|x- \vc{h}_{n,\ep} |^2}{2} \bfomega_{n, \ep}(t) \right]
\end{equation*}
Then $ \Div\vc{w}_{\ep}   = 0 $, $ \vc{w}_\ep = \vc{w}_{n,\ep} $ on $ B(\vc{h}_{n,\ep} (0), r_\ep)$ for $  n=1,\dots, N(\ep)$ and 
\begin{align*}
\| \nabla \vc{w}_{\ep} \|^2_{L^2(\Omega; R^{3\times 3})}  \lesssim & \, \sum_{n=1}^{N(\ep)} r_{\ep}|\vc{h}_{\ep,n}'(t)|^2 + r_{\ep}^3 |\bfomega_{n, \ep}(t) |^2 \\
\lesssim & \, N(\ep) r_{\ep} \sup_{n=1, \dots, N(\ep)} \left( |\vc{h}_{\ep,n}'(t)|^2 + r_{\ep}^2 |\bfomega_{n, \ep}(t) |^2  \right).
\end{align*}

\end{proof}
\subsection{Flow generated by $\Lambda_\ep$}

In view of \eqref{v2}, the vector field $\Lambda_\ep$ generates a flow $\vc{X}_\ep$. Specifically, $\vc{X}_\ep (t,y)$ is the unique solution of the
non--linear ODE system 
\begin{equation} \label{v3}
	\frac{\D }{\dt}\vc{X}_\ep (t,y) = \Lambda_\ep (t, \vc{X}_\ep(t,y) ),\ 
	\vc{X}_\ep (0, y) = y,\ y \in \Omega.
\end{equation}	

As the velocities $\vc{h}'_{n,\ep}$ are of order ${o}(\ep^2)$, we have 
\begin{equation} \label{v4}
	\Lambda_\ep (t, x) = \vc{h}'_{n,\ep}(t) \ \mbox{if}\ 
x \in B(\vc{h}_{n,\ep}(t), r_\ep ),\ n = 1, \dots, N(\ep).
\end{equation}
Moreover, 
\begin{equation} \label{v5}
	\Lambda_\ep (t, x) = 0 \ \mbox{whenever} \ 
{\rm dist} \left[ x ; \left\{ \vc{h}_{1, \ep}(0), \dots, \vc{h}_{N, \ep}(0) \right\} \right] > \frac{3}{4} \ep.
\end{equation}	

\subsection{Estimates on the flow mapping}
\label{est1}

Differentiating equation \eqref{v3} in $x$ we get 
\begin{equation} \label{v6a}
\frac{\D }{\dt} [D_y \vc{X}_\ep] = \Big[ D_x \Lambda_\ep(t, \vc{X}_\ep (t,y) \Big] \circ [D_y \vc{X}_\ep],\ D_y \vc{X}_\ep(0, \cdot) = \mathbb{I}.
	\end{equation}
In view of the bound \eqref{v2}, we may infer that 
\begin{equation} \label{v6b}
| D_y \vc{X}_\ep (t, y) - \mathbb{I} | \aleq \ep \ \mbox{uniformly for} \ 
T \in [0,T], \ y \in \Omega.
\end{equation}		

Next, differentiating \eqref{v6a}, we obtain 
\begin{align} 
	\frac{\D }{\dt} [D^2_{y,y} \vc{X}_\ep] &= \Big[ D_x \Lambda_\ep(t, \vc{X}_\ep (t,y) \Big] \circ [D^2_{y,y} \vc{X}_\ep] + 
\Big[ D^2_{x,x} \Lambda_\ep(t, \vc{X}_\ep (t,y) \Big] \circ [ D_y \vc{X}_\ep] \circ  [ D_y \vc{X}_\ep],\br D^2_{y,y} \vc{X}_\ep (0, \cdot) &= 0.
\label{v6c}
\end{align}
Consequently, in accordance with \eqref{v2}, 
\begin{equation} \label{v6d}
\Big| D^2_{y,y} \vc{X}_\ep (t,y) \Big| \to 0 \ \mbox{as}\ \ep \to 0 
\ \mbox{uniformly in}\ t \in [0,T], \ y \in \Omega.	
	\end{equation}

\subsection{Inverse of the flow mapping}
\label{est2}

Together with the flow mapping $\vc{X}_\ep = \vc{X}_\ep (t,y)$ determined by \eqref{v3}, we introduce its inverse $\vc{Y}(t,y)$, 
\begin{equation} \label{v6}
\vc{Y}_\ep (t, \vc{X}_\ep (t, y) ) = y,\ y \in \Omega \quad \text{ and } \quad  
\vc{X}_\ep (t, \vc{Y}_\ep (t,x) ) = x,\ x \in \Omega. 	
	\end{equation}
In particular, differentiating \eqref{v6} yields
\begin{equation} \label{v7}
D_x \vc{Y}_\ep (t, \vc{X}_\ep (t, y) ) \circ D_y \vc{X}_\ep (t, y) = \mathbb{I} \quad \text{ and } \quad 
D_y \vc{X}_\ep (t, \vc{Y}_\ep (t,x)) \circ D_x \vc{Y}_\ep (t,x) = \mathbb{I}.
	\end{equation}	
Writing 
\begin{align}
D_x \vc{Y}_\ep (t, \vc{X}_\ep (t, y) ) - \mathbb{I} &= 
D_x \vc{Y}_\ep (t, \vc{X}_\ep (t, y) ) \circ (\mathbb{I} - D_y \vc{X}_\ep (t, y) )\br &= \Big[ D_x \vc{Y}_\ep (t, \vc{X}_\ep (t, y) ) - \mathbb{I} \Big] \circ \Big[ \mathbb{I} - D_y \vc{X}_\ep (t, y) \Big] + \Big[ \mathbb{I} - D_y \vc{X}_\ep (t, y)\Big] .
\nonumber
\end{align}
In view of \eqref{v6b}, we conclude
\begin{equation} \label{v8}
\Big| D_x \vc{Y}_\ep (t, x ) - \mathbb{I} \Big| \aleq \ep \ \mbox{uniformly for}\ 
T \in [0,T],\ x \in \Omega.
\end{equation}
Finally, differentiating once more \eqref{v7} we obtain, by virtue of \eqref{v6d}, 
\begin{equation} \label{v13} 
	\Big| D^2_{x,x} \vc{Y}_\ep (t,x) \Big| \to 0 \ \mbox{as}\ \ep \to 0 
	\ \mbox{uniformly in}\ t \in [0,T], \ x \in \Omega.	
	\end{equation}

\subsection{Estimates of the time derivative}
\label{est3}

Finally, we estimate the time derivative of the flow mapping. To this end, differentiate equation \eqref{v3} to obtain 
\begin{equation} \label{v14}
\Big[ \partial_t  D_y \vc{X}_\ep \Big] (t,y) = D_x \Lambda_\ep \circ
D_y \vc{X}_\ep(t,y) \approx \ep \ \mbox{uniformly for} \ t \in [0,T],\ y \in \Omega, 
\end{equation}
where we have used \eqref{v2}, \eqref{v6b}.

Similarly, differentiating in time \eqref{v6}, we conclude 
\begin{equation} \label{v15}
	\Big| \partial_t D_x \vc{Y}_\ep(t,x) \Big| \aleq \ep \ \mbox{uniformly for}\ 
	t \in [0,T], \ x \in \Omega.
\end{equation}

\subsection{Change of variables}
\label{ch}

We introduce a change of variables 
\[
(t, x) \mapsto (t, y),\ y = \vc{Y}_\ep (t,x),
\]
where $\vc{Y}_\ep$ is the inverse of the flow mas $\vc{X}_\ep$ defined via \eqref{v3}.	As $\Div \Lambda_\ep = 0$, the Jacobian of the transformation is equal to 1. Note that the transformation maps 
the time dependent domains $\Omega_{\ep, t}$ on $\Omega_{\ep,0}$, whereas the moving rigid particles $B(\vc{h}_{n, \ep}(t), r_\ep)$ are mapped onto 
static ones $B(\vc{h}_{n,\ep}(0), r_\ep)$. 

Any function $f = f(t,x)$ is transformed to $\widetilde{f}(t,y)$, 
\begin{equation} \label{w1}
\widetilde{f} (t, \vc{Y}_\ep (t,x)) = f(t,x) \ \mbox{or}\ 
\widetilde{f} (t, y) = f(t, \vc{X}(t,y))  
\end{equation}
\begin{equation} \label{w1a}
\int_\Omega \widetilde{f} (\tau, y) \ \D y = 
\intO{ f(\tau,x) } \ \mbox{for a.a.} \ \tau \in (0,T).
\end{equation}

This type of change of variables ``freezing'' the moving objects is frequently used in the literature devoted to the fluid--structure interaction problems, see e.g. \cite{BraNec1}, Cumsille and Takahashi \cite{CumTak}, Glass et al. 
\cite{GLMS}, Roy and Takahashi \cite{RoyTak} among others.

\section{Navier--Stokes system in the new variables}
\label{N}

Our next goal is to rewrite the Navier--Stokes system in the $(t,y)$ variables introduced in Section \ref{ch}. 

\subsection{Transformed velocity field}

Revisiting the weak formulation of the problem 
\eqref{P9}--\eqref{P11} we introduce the transformed velocity 
\[
\widetilde{\vu}_\ep (t, \vc{Y}_\ep (t,x)) = \vue (t, x). 
\]
Accordingly, 
\[
\nabla_y \widetilde{\vu}_\ep \Big( t, \vc{Y}_\ep (t,x) \Big) \cdot [D_x \vc{Y}_\ep] \Big( t, \vc{X}_\ep (t, 
\vc{Y}(t,x) \Big) 
= \Grad \vue (t, x).
\]
As the original velocity coincides with the rigid velocities on the particles (cf. \eqref{P9}), we get 
\begin{equation} \label{N1}
\widetilde{\vu}_\ep (t, y) = \vc{w}_{n,\ep} (t,y) \ \mbox{for}\ y \in B(\vc{h}_{n,\ep}(0), r_\ep),\ n=1,\dots, N(\ep).
\end{equation}

\subsection{Weak formulation in the new variables}

Applying the change of variables to a test function $\bfphi$, we get 
\[
\widetilde{\bfphi} (y, \vc{Y}_\ep (t,x)) = \bfphi(t,x).
\]
Consequently, 
\[
\nabla_y \wbfphi\Big( y, \vc{Y}_\ep(t,x) \Big) \cdot D_x \vc{Y}_\ep \Big(t, \vc{X}_\ep(t, \vc{Y}_\ep(t,x)) \Big)  = \Grad \bfphi(t,x), 
\]
and 
\[
\partial_t \wbfphi \Big(t, \vc{Y}_\ep(t, x) \Big) + [\nabla_y \wbfphi] (t, \vc{Y}_\ep(t, x) ) \cdot [\partial_t \vc{Y}_\ep]\Big( t, \vc{X}_\ep (t, \vc{Y}_\ep (t,x) ) \Big) = 
\partial_t \bfphi(t,x).
\]

The transformed test functions are smooth in the spatial variable $x$ and continuously differentiable in time. Moreover, in accordance with \eqref{P12}, 
\begin{equation} \label{N3}
\wbfphi \in C^1_c ([0,T) \times \Omega_{\ep,0}).	
	\end{equation}

Rewriting the weak formulation \eqref{P11} in the new variables $(t,y)$ we obtain
\begin{align}
\int_0^T &\int_{\Omega} \Big[ \widetilde{\vu}_\ep(t,y) \cdot \partial_t \wbfphi (t,y) + \widetilde{\vu}_\ep (t,y) \cdot \nabla_y \widetilde{\bfphi} (t,y) \cdot 
[\partial_t \vc{Y}_\ep] (t, \vc{X}_\ep (t,y) ) \Big] \D y \br 
&+ \int_0^T \int_{\Omega} [ \widetilde{\vu}_\ep (t,y) \otimes \widetilde{\vu}_\ep (t,y) ] : 
\Big[ \nabla_y \widetilde{\bfphi} (t,y) \circ [D_x \vc{Y}_\ep ](t, \vc{X}_\ep(t,y)) \Big] \D y \dt \br 
&= \int_0^T \int_{\Omega} \Big[ 
\nabla_y \widetilde{\vu}_\ep (t,y) \cdot [D_x \vc{Y}_\ep] (t, \vc{X}_\ep (t,y)) \Big]: 
\Big[ 
\nabla_y \widetilde{\bfphi}(t,y) \cdot [D_x \vc{Y}_\ep ] (t, \vc{X}_\ep (t,y)) \Big] \D y \dt
\label{w4}
	\end{align}
provided $\bfphi(0, \cdot) = \wbfphi (0, \cdot) = 0$.	

The apparent drawback of \eqref{w4} is that neither $\tvu_\ep$ nor $\wbfphi$ are solenoidal. To overcome this problem,  
write 
\begin{align}
\nabla_y \widetilde{\vu}_\ep(t,y) \cdot [D_x \vc{Y}_\ep](t, \vc{X}_\ep(t,y)) &= 
\nabla_y \Big[ \widetilde{\vu}_\ep(t,y) \cdot [D_x \vc{Y}_\ep ](t, \vc{X}_\ep(t,y)) \Big] \br &- 
\widetilde{\vu}_\ep \cdot [D^2_{x,x} \vc{Y}_\ep] (t, \vc{X}_\ep(t,y)) \cdot \nabla_y \vc{X}_\ep(t,y),
\nonumber
\end{align}
and, similarly,
\begin{align}
\nabla_y \widetilde{\bfphi}(t,y) \cdot [D_x \vc{Y}_\ep] (t, \vc{X}_\ep (t,y)) &= 
\nabla_y \Big[ \widetilde{\bfphi}(t,y) \cdot [D_x \vc{Y}_\ep] (t, \vc{X}_\ep (t,y)) \Big] \br  &- 
\widetilde{\bfphi} \cdot [D^2_{x,x} \vc{Y}_\ep (t, \vc{X}(t,y)) ] \cdot \nabla_y \vc{X}_\ep(t,y).
\nonumber
\end{align}

Now, we introduce a new velocity field 
\begin{equation} \label{w5}
	\vc{v}_\ep (t,y) = \widetilde{\vu}_\ep(t,y) \cdot [ D_x \vc{Y}_\ep ] (t, \vc{X}_\ep (t,y)).  
	\end{equation}
Similarly, we consider a new class of test functions,  
\begin{equation} \label{w6}
\bfpsi(t,y) = 	\widetilde{\bfphi}(t,y) \cdot [D_x \vc{Y}_\ep] (t, \vc{X}_\ep (t,y)).
	\end{equation}
It is easy  to check that 
\begin{equation} \label{N7}
{\rm div}_y \vc{v}_\ep = 0,\ {\rm div}_\ep \bfpsi = 0 .
\end{equation}
Moreover, in accordance with \eqref{N1}, 
\begin{equation} \label{N8}
\vc{v}_\ep|_{\partial \Omega_{\ep,0}} = \vc{w}_\ep, 
\end{equation}	
where $\vc{w}_\ep$ is the extension of the rigid velocities constructed 	
in Lemma \ref{LC1}. Finally, by virtue of \eqref{N3}, 
\begin{equation} \label{N9}
	\bfpsi \in C^1_c ([0,T) \times \Omega_{\ep, 0}). 
\end{equation}

The ultimate goal of this part is rewriting the weak formulation \eqref{w4}
in terms of the new variables $\vc{v}_\ep$ and $\bfpsi$. This is a bit tedious 
but rather straightforward process, where we systematically use the identity 
\begin{equation} \label{w11}
	D_y \vc{X}_\ep (t,y) \circ D_x \vc{Y}_\ep  (t, \vc{X}_\ep(t,y)) = \mathbb{I}.
\end{equation}	

\medskip 

{\bf Step 1}

\medskip 	
The integral on the right--hand side of \eqref{w4} reads
\begin{align}
	\int_0^T &\int_{\Omega} \Big[ 
	\nabla_y \widetilde{\vu}_\ep (t,y) \cdot [D_x \vc{Y}_\ep ](t, \vc{X}_\ep(t,y)) \Big]: 
	\Big[ 
	\nabla_y \widetilde{\bfphi}(t,y) \cdot [ D_x \vc{Y}_\ep ](t, \vc{X}_\ep (t,y)) \Big] \D y \dt
\br &=	\int_0^T \int_{\Omega} \Big( \nabla_y \vc{v}_\ep (t,y) - \widetilde{\vu}_\ep \cdot [D^2_{x,x} \vc{Y}_\ep] (t, \vc{X}_\ep (t,y)) \cdot [D_y \vc{X}_\ep ](t,y) \Big) \br 
&\quad \quad \quad \quad : 
\Big( \nabla_y \bfpsi(t,y) - \widetilde{\bfphi} \cdot [D^2_{x,x} \vc{Y}_\ep (t, \vc{X}_\ep (t,y))] \cdot [D_y \vc{X}_ep ] (t,y) \Big) \ \D y \dt \br 
&= \int_0^T \int_{\Omega} \Grad \vc{v}(t,y) : \Grad \bfpsi (t,y) \ \D y \dt \br
&\quad - \int_0^T \int_{\Omega}  \nabla_y \vc{v}(t,y) : [D^2_{x,x} \vc{Y}_\ep] (t, \vc{X}_\ep (t,y)) \cdot [D_y \vc{X}_\ep] (t,y) \cdot [D_y \vc{X}_\ep] (t,y) \cdot \bfpsi \ \D y \dt \br 
&\quad - \int_0^T \int_{\Omega} \widetilde{\vu}_\ep \cdot [D^2_{x,x} \vc{Y}_\ep] (t, \vc{X}(t,y)) \cdot [D_y \vc{X}_\ep ](t,y) : \nabla_y \bfpsi \ \D y \dt \br 
&\quad  + \int_0^T \int_{\Omega} \widetilde{\vu}_\ep \cdot [D^2_{x,x} \vc{Y}_\ep] (t, \vc{X}_\ep (t,y)) \cdot [D_y \vc{X}_\ep] (t,y) \br \quad &\quad \quad \quad \quad \quad \cdot [D^2_{x,x} \vc{Y}_\ep ](t, \vc{X}_\ep (t,y)) \cdot [D_y \vc{X}_\ep ](t,y) \cdot [D_y \vc{X}_\ep ](t,y) \cdot \bfpsi \ \D y \dt.
	\label{w7}
	\end{align}
	
	\medskip 
	
	{\bf Step 2}
	
	\medskip 	
	
Similarly, 
\begin{align}  
\int_0^T &\int_{\Omega} \widetilde{\vu}_\ep (t,y) \otimes \widetilde{\vu}_\ep(t,y) : 
\Big[ \nabla_y \widetilde{\bfphi} (t,y) \cdot [D_x \vc{Y}_\ep] (t, \vc{X}_\ep (t,y)) \Big] \D y \dt
\br &= \int_0^T \int_{\Omega} [ \widetilde{\vu}_\ep (t,y) \otimes \widetilde{\vu}_\ep ] (t,y) : 
\nabla_y \bfpsi(t,y) \D y \dt
\br &-\int_0^T \int_{\Omega} \widetilde{\vu}_\ep (t,y) \otimes \widetilde{\vu}_\ep (t,y) : 
\Big[ D_y \vc{X}_\ep (t,y) \cdot[ D^2_{x,x} \vc{Y}_\ep ](t, \vc{X}_\ep (t,y)) 
\cdot [D_y \vc{X}_\ep (t,y)] \Big]\cdot \bfpsi(t,y) \D y \dt.
\label{w8}
\end{align}

\medskip 

{\bf Step 3}

\medskip 	

Finally, the first integral in \eqref{w4} can be handled as follows:
\begin{align} 
\int_0^T &\int_{\Omega} \Big[ \widetilde{\vu}_\ep (t,y) \cdot \partial_t \widetilde{\bfphi} (t,y) + \widetilde{\vu}_\ep (t,y) \cdot \widetilde{\bfphi} (t,y) \cdot 
[\partial_t \vc{Y}_\ep] (t, \vc{X}_\ep (t,y) ) \Big] \D y \br 
&= \int_0^T \int_{\Omega}  \widetilde{\vu}_\ep (t,y) \cdot \partial_t \Big[
[D_y \vc{X}_\ep] (t,y) \cdot  [D_x \vc{Y}_\ep ](t, \vc{X}_\ep (t,y)) \widetilde{\bfphi} (t,y) \Big] \D y \dt\br &\quad +   \int_0^T \int_{\Omega}  \widetilde{\vu}_\ep (t,y) \cdot \nabla_y \widetilde{\bfphi} (t,y) \cdot 
[\partial_t \vc{Y}_\ep] (t, \vc{X}_\ep (t,y) ) \D y \br 
&= \int_0^T \int_{\Omega} \Big[ [D_y \vc{X}_\ep] (t,y) \cdot  \vc{v}_\ep (t,y) \Big] \cdot \partial_t \Big[
[D_y \vc{X}_\ep] (t,y) \cdot  \bfpsi (t,y) \Big] \D y
\br &\quad +   \int_0^T \int_{\Omega}  \widetilde{\vu}_\ep (t,y) \cdot \nabla_y \Big[ D_y \vc{X}_\ep  (t,y) \cdot \bfpsi (t,y) \Big] \cdot 
[\partial_t \vc{Y}_\ep ](t, \vc{X}_\ep (t,y) ) \D y \br 
&= \int_0^T \int_{\Omega} \Big[ [D_y \vc{X}_\ep] (t,y) \cdot  \vc{v}_\ep(t,y)  \cdot 
[D_y \vc{X}_\ep ] (t,y) \Big] \cdot  \partial_t \bfpsi (t,y) \D y
\br
&\quad  +\int_0^T \int_{\Omega} \Big[ D_y \vc{X}_\ep (t,y) \cdot  \vc{v}_\ep (t,y) \Big] \cdot \partial_t 
\Big( \nabla_y \vc{X}_\ep (t,y) \Big) \cdot  \bfpsi (t,y) \Big] \D y
\br &\quad +   \int_0^T \int_{\Omega}  \widetilde{\vu}_\ep (t,y) \cdot \nabla_y \Big[ D_y \vc{X}_\ep  (t,y) \cdot \bfpsi (t,y) \Big] \cdot 
[\partial_t \vc{Y}_\ep] (t, \vc{X}_\ep (t,y) ) \D y . 
\label{w9}
\end{align}

Summing up the previous identities, we obtain the weak formulation in the new coordinates:
\begin{align}
	\int_0^T &\int_{\Omega} \Big[ [D_y \vc{X}_\ep ] (t,y) \cdot  \vc{v}_\ep (t,y)  \cdot 
	[D_y \vc{X}_\ep] (t,y) \Big] \cdot  \partial_t \bfpsi (t,y) \D y
	\br
	&+\int_0^T \int_{\Omega} [\widetilde{\vu}_\ep (t,y) \otimes \widetilde{\vu}_\ep (t,y)]  : 
	\nabla_y \bfpsi(t,y) \D y \dt
	= \int_0^T \int_{\Omega} \Grad \vc{v}_\ep (t,y) : \Grad \bfpsi (t,y) \ \D y \dt + \mathcal{R}_\ep
\label{N14}
\end{align}
for any $\bfpsi \in C^1_c((0,T) \times \Omega_{\ep,0}$, ${\rm div} \bfpsi = 0$.
The remainder term $\mathcal{R}_\ep$ reads	
\begin{align}	
	\mathcal{R}_\ep = 
	&- \int_0^T \int_{\Omega}  \nabla_y \vc{v}_\ep(t,y)  \cdot [D^2_{x,x} \vc{Y}_\ep] (t, \vc{X}_\ep (t,y))\cdot [D_y \vc{X}_\ep] (t,y) \cdot [D_y \vc{X}_\ep ](t,y) \cdot \bfpsi \ \D y \dt \br 
	&- \int_0^T \int_{\Omega} \widetilde{\vu}_\ep(t,y) \cdot [D^2_{x,x} \vc{Y}_\ep] (t, \vc{X}_\ep (t,y)) \cdot [D_y \vc{X}_\ep (t,y)] : \nabla_y \bfpsi \ \D y \dt \br 
	&+ \int_0^T \int_{\Omega} \widetilde{\vu}_\ep \cdot [D^2_{x,x} \vc{Y}_\ep ](t, \vc{X}(t,y)) \cdot [D_y \vc{X}_\ep](t,y) \br &\quad \quad \quad \quad \cdot [D^2_{x,x} \vc{Y}_\ep] (t, \vc{X}_\ep(t,y)) \cdot [D_y \vc{X}_\ep] (t,y) \cdot [D_y \vc{X}_\ep](t,y) \cdot \bfpsi \ \D y \dt \br 
	&- \int_0^T \int_{\Omega} \Big[ [D_y \vc{X}_\ep ] (t,y) \cdot  \vc{v}_\ep (t,y) \Big] \cdot \partial_t 
	\Big( D_y \vc{X}_\ep (t,y) \Big) \cdot  \bfpsi (t,y) \Big] \D y
	\br &-   \int_0^T \int_{\Omega}  \widetilde{\vu}_\ep (t,y) \cdot \nabla_y \Big[ [D_y \vc{X}_\ep]  (t,y) \cdot \bfpsi (t,y) \Big] \cdot 
	[\partial_t \vc{Y}_\ep ](t, \vc{X}_\ep (t,y) ) \D y \br 
	&+\int_0^T \int_{\Omega} \widetilde{\vu}_\ep (t,y) \otimes \widetilde{\vu}_\ep (t,y) : 
	\Big[ [D_y \vc{X}_\ep] (t,y) \cdot [D^2_{x,x} \vc{Y}_\ep ](t, \vc{X}_\ep(t,y)) \cdot
	[D_y \vc{X}_\ep ](t,y) \Big]\cdot \bfpsi(t,y) \D y \dt. 
	\label{w10}
	\end{align}
	
\section{Asymptotic limit}
\label{A}	

We complete the proof of Theorem \ref{TP1} by performing the limit $\ep \to 0$ in the transformed system \eqref{w10}. As the position of the particles are fixed, we shall write 
\[
\Omega_\ep \ \mbox{instead of}\ \Omega_{\ep, 0}, \ \mbox{and}\  
\vc{h}_{n,\ep} \ \mbox{instead of}\ \vc{h}_{n, \ep}(0).
\]

\subsection{Remainder term}

Our first goal is to show that the remainder term $\mathcal{R}_\ep$ is small in the dual $W^{-1,2}(\Omega)$ norm. To this end, first observe that both 
$\widetilde{\vu}_\ep$, $\vc{v}_\ep$ inherit the uniform bounds from hypothesis 
\ref{P15}, specifically, 
\begin{align} 
	\| \widetilde{\vu}_\ep \|_{L^\infty(0,T; L^2(\Omega; R^3))} &+ \| \widetilde{\vu}_\ep \|_{L^2(0,T; W^{1,2}_0(\Omega; R^3))} \aleq 1, \br
	\| \vc{v}_\ep \|_{L^\infty(0,T; L^2(\Omega; R^3))} &+ \| \vc{v}_\ep \|_{L^2(0,T; W^{1,2}_0(\Omega; R^3))} \aleq 1\ 
	 \mbox{uniformly for}\ \ep \to 0. \label{A1}
\end{align}

Combining the uniform bounds established on the flow mapping established in 
Sections \ref{est1}--\ref{est3}, notably \eqref{v6b}, \eqref{v6d}, \eqref{v14}
for $\vc{X_\ep}$, and \eqref{v8}, \eqref{v13}, \eqref{v15} for $\vc{Y}_\ep$,  
with \eqref{A1}, we easily deduce: 
\begin{align} 
\nabla_y \vc{v}_\ep : [D^2_{x,x} \vc{Y}_\ep] (t, \vc{X}_\ep (t,y))\cdot [D_y \vc{X}_\ep]  \cdot [D_y \vc{X}_\ep ] \to 0 \ \mbox{in}\ 
L^\infty(0,T; L^2(\Omega; R^3)), \br
\widetilde{\vu}_\ep \cdot [D^2_{x,x} \vc{Y}_\ep] (t, \vc{X}_\ep (t,y)) \cdot [D_y \vc{X}_\ep] \to 0 \ \mbox{in}\ L^2(0,T; L^6(\Omega; R^3)), \br
\widetilde{\vu}_\ep \cdot [D^2_{x,x} \vc{Y}_\ep ](t, \vc{X}(t,y)) \cdot [D_y \vc{X}_\ep] \cdot [D^2_{x,x} \vc{Y}_\ep] (t, \vc{X}_\ep(t,y)) \cdot [D_y \vc{X}_\ep]  \cdot [D_y \vc{X}_\ep] \to 0 \ \mbox{in}\ L^2(0,T; L^6(\Omega; R^3)), \br \Big[ [D_y \vc{X}_\ep ]  \cdot  \vc{v}_\ep  \Big] \cdot \partial_t 
\Big( D_y \vc{X}_\ep  \Big) \to 0 \ \mbox{in}\ L^2(0,T; L^6(\Omega; R^3)),\br
\widetilde{\vu}_\ep  \cdot [D^2_{y,y} \vc{X}_\ep] \cdot 
[\partial_t \vc{Y}_\ep ](t, \vc{X}_\ep (t,y) ) \to 0 \ \mbox{in}\ L^2(0,T; L^6(\Omega; R^3)), \br
[\widetilde{\vu}_\ep  \otimes \widetilde{\vu}_\ep] : 
\Big[ [D_y \vc{X}_\ep]  \cdot [D^2_{x,x} \vc{Y}_\ep ](t, \vc{X}_\ep(t,y)) \cdot
[D_y \vc{X}_\ep ] \Big] \to 0 \ \mbox{in}\ L^\infty(0,T; L^1(\Omega; R^3)) \cap 
L^2(0,T; L^3(\Omega; R^3))
	\label{A2}	  
\end{align}	
as $\ep \to 0$. Here, we have used the Sobolev embedding $W^{1,2} \hookrightarrow 
L^6$. 

\subsection{Strong convergence of the velocity field}
\label{scv}

Using the uniform bounds \eqref{P15}, \eqref{A1} to obtain, up to a suitable subsequence, 
\begin{align} \label{A3}
	\vue \to \vu \ \mbox{weakly-* in}\ L^\infty(0,T; L^2(\Omega; R^3)), \ \mbox{and}
	\ \mbox{weakly in}\ L^2(0,T; W^{1,2}_0(\Omega; R^3)), \\
		\widetilde{\vu}_\ep \to \widetilde{\vu} \ \mbox{weakly-* in}\ L^\infty(0,T; L^2(\Omega; R^3)), \ \mbox{and}
	\ \mbox{weakly in}\ L^2(0,T; W^{1,2}_0(\Omega; R^3)),
	\label{A4}\\ 
	\label{A5}
		\vc{v}_\ep \to \vc{v} \ \mbox{weakly-* in}\ L^\infty(0,T; L^2(\Omega; R^3)), \ \mbox{and}
	\ \mbox{weakly in}\ L^2(0,T; W^{1,2}_0(\Omega; R^3)).
	\end{align}
Moreover, it is easy to see that the three limits coincide, $\vu = \widetilde{\vu} = \vc{v}$. Indeed, in view of \eqref{w5}, 	
\[
\vc{v}_\ep (t,y) =\widetilde{\vu}_\ep(t,y) + \widetilde{\vu}_\ep(t,y) \cdot \Big( [ D_x \vc{Y}_\ep ] (t, \vc{X}_\ep (t,y)) - \mathbb{I} \Big);
\]
whence, by virtue of \eqref{v8}, $\tvu = \vc{v}$. To show $\vu = \tvu$ first observe 
\[
\int_0^T \int_{\Omega} \widetilde{\vu}_\ep (t,y) \cdot \wbfphi (t,y) \ \D y \dt = 
\int_0^T \intO{ \vue (t,x) \cdot \bfphi (t,x) } \dt, 
\] 
where 
\[
\wbfphi (t,y) = \bfphi (t, \vc{X}_\ep (t, y)).
\]
Consequently, fixing the test function $\bfphi$, we get 
\begin{align}
\int_0^T & \int_{\Omega} \widetilde{\vu} (t,y) \cdot \bfphi (t,y) \ \D y \dt = 
\lim_{\ep \to 0} \int_0^T \int_{\Omega} \widetilde{\vu}_\ep (t,y) \cdot \bfphi (t,y) \ \D y \dt \br
&= \lim_{\ep \to 0} \int_0^T \int_{\Omega} \widetilde{\vu}_\ep (t,y) \cdot \Big( \bfphi (t,y) - \bfphi(t, \vc{X_\ep}(t,y)  \Big) \D y \dt 
+ \lim_{\ep \to 0} \int_0^T \intO{ \vue (t,x) \cdot \bfphi (t,x) } \dt \br 
&= \int_0^T \intO{ \vu (t,x) \cdot \bfphi (t,x) } \dt, 
\nonumber
\end{align}
where we have used bounded of $\widetilde{\vu}_\ep$ in $L^\infty(0,T; L^2(\Omega; R^3)$, together with 
\[
\sup_{t \in [0,T], y \in \Omega} \Big|\bfphi (t,y) - \bfphi(t, \vc{X_\ep}(t,y) \Big| \to 0
\ \mbox{as}\ \ep \to 0
\]
for any $\bfphi \in C^1_c((0,T) \times \Omega; R^3)$. As $\bfphi$ was arbitrary, 
we conclude $\tvu = \vu$.

Our next goal is to establish the strong convergence, specifically, 
\begin{equation} \label{A6}
	\vc{v}_\ep \to \vc{v} \ \mbox{in}\ L^2((0,T) \times \Omega; R^3).
\end{equation} 
Repeating the above arguments, we can show easily that \eqref{A6} yields
the same for the original velocity,  
\begin{equation} \label{A7}
		\vc{u}_\ep \to \vc{u} \ \mbox{in}\ L^2((0,T) \times \Omega; R^3).
	\end{equation} 
	
To show \eqref{A7} we use a modification of the Aubin--Lions argument. To this end, we need a \emph{restriction} operator $R_\ep$ enjoying the following properties:
\begin{align} 
R_\ep : W^{1,2}_0 (\Omega; R^3) &\to W^{1,2}_0 (\Omega_\ep; R^3), \br
\| R_\ep [\bfphi ] \|_{W^{1,2} (\Omega)} &\aleq \| \bfphi \|_{W^{1,2}(\Omega; R^3)}, \br 
{\rm div}_y \bfphi = 0 \ &\Rightarrow \ {\rm div}_y R_\ep [\bfphi] = 0, \br
R_\ep [\bfphi] &\to \bfphi \ \mbox{in}\ L^2(\Omega; R^3) \ \mbox{as}\ \ep \to 0 \ \mbox{for any}\ 
\bfphi \in W^{1,2}_0(\Omega).  
\label{A8}
\end{align}
There are several constructions of a restriction operator available, see e.g. 
Allaire \cite{Allai4}, Diening et. al \cite{DieRuzSch}, Tartar \cite{Tar1}. Note that these constructions 
	of the restriction operator adapt to the non--periodic distribution of obstacles in a straightforward manner.

Now, consider a function 
\[
\bfpsi = \chi R_\ep [\phi],\ \chi = \chi(t), \chi \in C^\infty_c(0,T),\ 
\phi \in \DC(\Omega; R^3),\ {\rm div}_y \phi = 0.  
\] 
By virtue of the properties of the restriction operator, the function $\bfpsi$ is an eligible test function in the weak formulation \eqref{N14}. 
In view of the uniform bounds \eqref{A1}, \eqref{A2}, we deduce that the function 
\[
t \mapsto \int_{\Omega} \Big[ [D_y \vc{X}_\ep ] \cdot \vc{v}_\ep \cdot [D_y \vc{X}_\ep ] R_{\ep}[\phi] \Big] (t,y) \D y  ,\ t \in [0,T] 
\]
has a bounded time derivative in $L^q(0,T)$ for some $q > 1$ uniformly for $\ep \to 0$.

Next, 
\begin{align}
t \mapsto \int_{\Omega} \Big[ [D_y \vc{X}_\ep ] \cdot \vc{v}_\ep \cdot [D_y \vc{X}_\ep ] R_{\ep}[\phi] \Big] (t,y) \D y &=  \int_{\Omega} \Big[ [D_y \vc{X}_\ep ] \cdot \vc{v}_\ep \cdot [D_y \vc{X}_\ep ] \phi \Big] (t,y) \D y \br 
&+  \int_{\Omega} \Big[ [D_y \vc{X}_\ep ] \cdot \vc{v}_\ep \cdot [D_y \vc{X}_\ep ] ( R_\ep[\phi] - \phi ) \Big] (t,y) \D y,
\nonumber
\end{align}
where, by virtue of \eqref{A8} and the bounds \eqref{A1}, 
\[
\int_{\Omega} \Big[ [D_y \vc{X}_\ep ] \cdot \vc{v}_\ep \cdot [D_y \vc{X}_\ep ] ( R_\ep[\phi] - \phi ) \Big] (t,y) \D y \to 0 \ \mbox{in} \ L^\infty(0,T).
\]
Consequently, using Arzel\' a--Ascoli theorem and the bounds on $D_y \vc{X}_\ep$ established in 
\eqref{v6b}, we obtain 
\[
\left[ t \mapsto \int_{\Omega} \Big[ [D_y \vc{X}_\ep ] \cdot \vc{v}_\ep \cdot [D_y \vc{X}_\ep ] \phi \Big] (t,y) \D y \right] \to \left[ 
t \mapsto \int_{\Omega} [\vc{v} \cdot  \phi]  (t,y) \D y \right]\ \mbox{in}\ C[0,T] 
\]
for any $\phi \in \DC(\Omega; R^3)$, ${\rm div} \phi = 0$. Finally, using again
\eqref{v6b}, we may infer that
 	 \begin{equation} \label{A9}
 	 \left[ t \mapsto \int_{\Omega} [\vc{v}_\ep  \phi ] (t,y) \D y \right] \to \left[ 
 	 t \mapsto \int_{\Omega} [\vc{v} \cdot  \phi]  (t,y) \D y \right]\ \mbox{in}\ C[0,T] 
 	 \end{equation}
for any $\phi \in \DC(\Omega)$, ${\rm div} \phi = 0$. In view of  
the standard Aubin--Lions argument, the convergences \eqref{A5} and \eqref{A9} 
yield \eqref{A6}.  

Finally, observe that the same argument yields also the convergence of the initial 
	values
\[
\vu_{0, \ep} = \vc{v}_\ep (0, \cdot) \to \vu_0 \ \mbox{weakly in}\ L^2(\Omega; R^3).
\]

\subsection{Time regularization and homogenization}

Following the idea of \cite{BFN4} we regularize \eqref{N14} by means of a convolution with a time dependent families of regularizing kernels $\{ \theta_\delta \}_{\delta > 0}$. This means we consider the functions 
\[
\theta_{\delta}(\tau - t) \phi(x) ,\ \phi \in W^{1,2}_0 (\Ome; R^3),\ 
{\rm div}_y \phi = 0,\ \tau \in (\delta, T -\delta), 
\]
as test functions in \eqref{N14}. We denote 
\[
[f]_\delta (\tau, y) = \theta_\delta * f (\tau, y) = \int_0^T \theta_\delta (\tau - t) f(t,y) \ \dt.
\] 

The weak formulation \eqref{N14} gives rise to a family of \emph{stationary} Stokes problems for any fixed $\delta$ and $\tau \in (0,T)$:
\begin{align} 
	\int_{\Omega} \Grad [\vc{v}_\ep ]_\delta (\tau, \cdot) : \Grad \phi \ \D y &= 
- \int_{\Omega} \partial_t \Big[ [D_y \vc{X}_\ep ] (t,y) \cdot  \vc{v}_\ep (t,y)  \cdot 
[D_y \vc{X}_\ep] (t,y) \Big]_\delta (\tau, \cdot) \cdot \bfphi \D y \br 
&+ \int_{\Omega} [\widetilde{\vu}_\ep  \otimes \widetilde{\vu}_\ep ]_\delta  : 
\nabla_y \phi \ \D y + \left< \vc{g}_{\ep, \delta}(\tau, \cdot); \phi \right>
\label{A10}
\end{align}
for any 
\[
\phi \in W^{1,2}_0(\Omega_\ep; R^3),\ {\rm div}_y \phi = 0.
\]
Moreover, in view of the bounds established in \eqref{A2}, 
\[
\vc{g}_{\ep, \delta}(\tau, \cdot) \to 0 \ \mbox{in}\ 
W^{-1,2}(\Omega; R^3) \ \mbox{as}\ \ep \to 0 
\ \mbox{for any fixed}\ \delta > 0, \tau \in (0,T).
\]

Seeing that the regularized velocity $[\vc{v}_\ep]_\delta$ still satisfies the boundary conditions \eqref{N8} (with regularized rigid boundary velocities), we may apply Proposition \ref{Pp1} for any fixed $\tau > 0$. Keeping $\delta > 0$ and $\tau \in (0,T)$ fixed, we let $\ep \to 0$ in \eqref{A10} obtaining
\begin{align} 
	\int_{\Omega} \Grad [\vc{v} ]_\delta (\tau, \cdot) : \Grad \phi \ \D y &= 
	- \int_{\Omega} \partial_t [ \vc{v} ]_\delta (\tau, \cdot) \cdot \phi\ \D y \br &+ \int_{\Omega} [\vc{v}  \otimes \vc{v} ]_\delta (\tau, \cdot)  : 
	\nabla_y \phi \ \D y + \int_{\Omega} 6 \pi \mathfrak{R} [\vc{v}]_\delta (\tau, \cdot) \cdot \phi \ \D y \dt
	\label{A11}
\end{align}
for any 
\[
\phi \in W^{1,2}_0(\Omega; R^3),\ {\rm div}_y \phi = 0.
\]
Here we have used the strong convergence of $\vc{v}_\ep$ (hence also $\widetilde{\vu}_\ep$) established in \eqref{A6} to pass to the limit in the quadratic term $[\widetilde{\vu}_\ep  \otimes \widetilde{\vu}_\ep ]_\delta$.

Finally, we let $\delta \to 0$ in \eqref{A11}, and use the identity of the limits $\vu = \vc{v}$ established in Section \ref{scv} 
to conclude the proof of Theorem \ref{TP1}. 
\section{A priori estimate of the velocity of small rigid bodies}\label{sec7}

We prove Proposition \ref{Impr:contr}, that implies a control of the velocity of a the rigid bodies using an idea from \cite{BraNec2}.

\begin{proof}[Proof of Proposition \ref{Impr:contr}]

Suppose that there exists $ \bar{\ep} > 0 $ such that for any $ 0 < \ep < \bar{\ep}$ 
\begin{equation}
\label{hyp:21}
\sup_{t \in [0,T ]} |\vc{h}_{i, \ep}(t) - \vc{h}_{j,\ep}(t) | >  \ep,\ i\ne j,\ 	
{\rm dist}[\vc{h}_{n,\ep}(0); \partial \Omega] > \frac{1}{2}\ep.
\end{equation} 
Then for $ 0 <  \ep < \bar{\ep} $, we start by recalling a cut--off function introduced in Section \ref{v}. 
\[
\chi \in C^\infty_c [0, 1),\ \chi(z) = \left\{ \begin{array}{l} 1 
\ \mbox{if} \ 0 \leq z < \frac{1}{2}, \\ 
\in [0,1] \ \mbox{if} \ \frac{1}{2} \leq z \leq \frac{3}{4}, \\ 
0 \ \mbox{if}\ z > \frac{3}{4}.	 
	\end{array} \right.
\]
Moreover for $ z \in R^3 $, we define the velocity field
\begin{equation} \label{cut:off:2}
	\Psi_{r_\ep} [z](x) = \Curl \left[ \chi \left(2 \frac{|x |}{ r_{\ep}} \right) 	
	\mathbb{T}( x) \cdot  z  \right],  
\end{equation}	
where $ \mathbb{T} $ is defined in \eqref{def:T}.

Let $ \gamma \in C^{\infty}_c[0,T) $ and testing \eqref{P23} with 
\begin{equation}
\psi_{n,\ep} = \frac{1}{m_{n,\ep}} \Psi_{r_\ep} \left[\int_t^T \gamma(\tau) \ \D \tau \right](x-\vc{h}_{n,\ep}(t)),
\end{equation} 
we deduce 
\begin{align*}
\int_0^T & (\vc{h}_{n,\ep}'(t)-\vc{h}_{n,\ep}'(0))\cdot \gamma(t) \ \D t = \int_0^T \int_{\Omega_{t,\ep}} \vc{u}_{\ep} \cdot \partial_t \psi_{n,\ep} + [\vue \otimes \vue]:  \nabla_x \psi_{n,\ep}  \ \D x \D t \\
& - \int_0^T \int_{\Omega_{\ep,t}} \nabla_x \vue : \nabla_x \psi_{n,\ep}   \ \D x \D t + \int_{\Omega_{0,\ep}}  \vc{u}_{\ep,0}  \cdot \psi_{n,\ep}(0,.) \ \D x . 
\end{align*}
We compute
\begin{align*}
\partial_t \psi_{n,\ep}(t,x) = & \, -\frac{1}{m_{n,\ep}}\Psi_{r_{\ep}} \left[ \gamma(t)\right](x-\vc{h}_{n,\ep}(t)) \\
& \, -\vc{h}_{n,\ep}'(t) \cdot \nabla_x \left(\frac{1}{m_{n,\ep}} \Psi_{r_{\ep}} \left[ \int_t^T \gamma(\tau ) \ \D \tau \right] \right)(x-\vc{h}_{n,\ep}(t)).
\end{align*}
We deduce, for a constant $ C $ independent of $ \ep $ and $ n $
\begin{equation}
\label{M1}
\|\partial_t \psi_{n,\ep}(t,x) \|_{L^1(0,T;L^2(R^3)))} \leq C \frac{r_{\ep}^{3/2}}{m_{n,\ep}}\|\gamma \|_{L^1(0,T)} + C \frac{r_{\ep}^{1/2}}{m_{n,\ep}}\| \vc{h}_{n,\ep}' \|_{L^1(0,T)}  \|\gamma \|_{L^1(0,T)} 
\end{equation}
and similarly
\begin{align}
\label{M2}
\| \nabla_x \psi_{n,\ep}(t,x) \|_{L^{\infty}(0,T;L^2(R^3)))} \leq C \frac{r_{\ep}^{1/2}}{m_{n,\ep}} \|\gamma \|_{L^1(0,T)}. 
\end{align}
Korn inequality together with the Sobolev inequality imply that
\begin{equation}
\label{M3}
\| \vue \|_{L^2(0,T; L^6(\Omega))} \leq C \| D \vue \|_{L^2(0,T; L^2(\Omega))} \leq C,
\end{equation}  
by the energy estimates. The bounds \eqref{M1}-\eqref{M2}-\eqref{M3} imply 
\begin{align*}
\int_0^T (\vc{h}_{n,\ep}'(t)- & \vc{h}_{n,\ep}'(0))\cdot \gamma(t) \ \D t  \leq  C \Bigg( \frac{r_{\ep}^{3/2}}{m_{n,\ep}} \|\vue \|_{L^2(0,T;L^2(\Omega))} + \frac{r_{\ep}^{1/2}}{m_{n,\ep}} \|\vue \|_{L^2(0,T;L^6(\Omega))}^2 \\ 
& \, + \sqrt{T}\frac{r_{\ep}^{1/2}}{m_{n,\ep}} \|\nabla_x \vue \|_{L^2(0,T;L^2(\Omega))}  + \frac{r_{\ep}^{3/2}}{m_{n,\ep}} \| \vc{u}_{\ep,0}\|_{L^2(\Omega)} +  \frac{r_{\ep}^{1/2}}{m_{n,\ep}} \|\vue \|_{L^2(0,T;L^2(\Omega))}T | \vc{h}_{n,\ep}'(0)| \Bigg)\|\gamma\|_{L^1(0,T)} \\
& \, +C  \frac{r_{\ep}^{1/2}}{m_{n,\ep}} \|\vue \|_{L^2(0,T;L^2(\Omega))}T \| \vc{h}_{n,\ep}'- \vc{h}_{n,\ep}'(0)\|_{L^{\infty}(0,T)}  \|\gamma \|_{L^1(0,T)}.
\end{align*}
Dividing by $ \| \gamma \|_{L^1(0,T )} $ and taking the sup over all $  \gamma  \in C^{\infty}_c[0,T) $, we deduce
\begin{align*}
\| \vc{h}_{n,\ep}' - \vc{h}_{n,\ep}'(0)  \|_{L^{\infty}(0,T)} \leq &   C \frac{r_{\ep}^{1/2}}{m_{n,\ep}}  
 +C  \frac{r_{\ep}^{1/2}}{m_{n,\ep}} \|\vue \|_{L^2(0,T;L^2(\Omega))}T \| \vc{h}_{n,\ep}' - \vc{h}_{n,\ep}'(0) \|_{L^{\infty}(0,T)}.
\end{align*}
From hypothesis \eqref{H1}, we can absorb the second term of the right hand side and deduce the desired result. 

Similarly for the angular velocity, we consider, for $ z \in R^3 $, a auxiliary function
\begin{equation}
	\Phi_{r_\ep} [z](x) = \Curl \left[ - \chi \left(2 \frac{|x |}{ r_{\ep}} \right) 	
	 \frac{|x|^2}{2} z  \right],  
\end{equation}	
\begin{equation}
\phi_{n,\ep} = \frac{1}{r_{\ep} m_{n,\ep}} \Phi_{r_\ep} \left[\int_t^T  \gamma(\tau) \ \D \tau \right](x-\vc{h}_{n,\ep}(t)).
\end{equation}  
In the spirit of \eqref{M1} and \eqref{M2}, we have 
\begin{equation}
\label{M11}
\|\partial_t \phi_{n,\ep}(t,x) \|_{L^1(0,T;L^2(R^3)))} \leq C \frac{r_{\ep}^{3/2}}{m_{n,\ep}}\|\gamma \|_{L^1(0,T)} + C \frac{r_{\ep}^{1/2}}{ m_{n,\ep}}\| \vc{h}_{n,\ep}' \|_{L^1(0,T)}  \|\gamma \|_{L^1(0,T)} 
\end{equation}
and similarly
\begin{align}
\label{M22}
\| \nabla_x \phi_{n,\ep}(t,x) \|_{L^{\infty}(0,T;L^2(R^3)))} \leq C \frac{r_{\ep}^{1/2}}{ m_{n,\ep}} \|\gamma \|_{L^1(0,T)}. 
\end{align}
We deduce that 
\begin{align*}
\frac{2}{5}r_{\ep}&\int_0^T (\bfomega_{n,\ep}'(t)-  \bfomega_{n,\ep}'(0))\cdot \gamma(t) \ \D t \\ \leq & \,     C\|\gamma\|_{L^1(0,T)} \left(\frac{r_{\ep}^{1/2}}{ m_{n,\ep}}  
 +C  \frac{r_{\ep}^{1/2}}{ m_{n,\ep}} \|\vue \|_{L^2(0,T;L^2(\Omega))}T \| \vc{h}_{n,\ep}'(t)\|_{L^{\infty}(0,T)}\right) \\
\leq & \,  C\|\gamma\|_{L^1(0,T)} \frac{r_{\ep}^{1/2}}{ m_{n,\ep}}. 
\end{align*}
Dividing by $ \|\gamma\|_{L^1(0,T)} $ and taking the sup over all possible $ \gamma $, we deduce the result. 

We conclude the proof by showing that hypothesis \eqref{hyp:21} holds by contradiction. 
Suppose by contradiction that  \eqref{hyp:21} does not hold. There exists a sequence $ \ep_k \to 0 $ such that  \eqref{hyp:21} is not satisfied. By \eqref{H1} there exists $ \bar{k} $ such that for any $ k > \bar{k} $ we have
\begin{equation}
\label{hyp:k:bar}
|\vc{h}_{n,\ep_k}'(0)| < \frac{\ep_k}{8T} \quad \text{ and } \quad   \frac{r_{\ep_k}^{1/2}}{\ep_k m_{n,\ep_k}} < \frac{1}{4CT}.
\end{equation}
Let now $ t_k $ the infimum in $ [0,T] $ such that \eqref{hyp:21} with $ \ep = \ep_k $ does not hold. By continuity of $ \vc{h}_{n,\ep_k } $ and the fact that $ N(\ep_k) < \infty $ the infimum is a minimum. In $ [0,t_k) $ the bound \eqref{bound:velocity} holds. We deduce that
\begin{align}
|\vc{h}_{i,\ep_k}(t_k) - \vc{h}_{j,\ep_k}(t_k)| \geq & \, |\vc{h}_{i,\ep_k}(0) - \vc{h}_{j,\ep_k}(0)|   - \left| \int_0^{t_k} \vc{h}_{i,\ep_k}'(\tau) \ \D  \tau \right| - \left| \int_0^{t_k} \vc{h}_{j,\ep_k}'(\tau) \ \D  \tau \right| \nonumber \\
\geq & \,  2 \ep_k - \frac{\ep_k}{8}- \frac{\ep_k}{8}- \frac{\ep_k}{8}- \frac{\ep_k}{8} =  \frac{3}{2}\ep_k. \label{contr:1}
\end{align}
Similarly we have
\begin{equation}
\label{contr:2}
{\rm dist}[\vc{h}_{n,\ep_k}(t); \partial \Omega] \geq {\rm dist}[\vc{h}_{n,\ep_k}(0); \partial \Omega]- \left| \int_0^{t_k} \vc{h}_{n,\ep_k}'(\tau) \ \D  \tau \right| \geq \frac{3}{4}\ep_k.
\end{equation}
Then \eqref{contr:1} and \eqref{contr:2}, contradict that \eqref{hyp:21} does not holds. The proof is then completed.

\end{proof}

\section{Concluding remarks}
\label{CC}

The results were stated in the space dimension $d=3$. Extension to $d=2$ can be done 
with the appropriate scaling of the radius $r_\ep$, 
specifically, 
\[
- \ep^2 \log(r_\ep) \to C \in (0, \infty). 
\]
The homogenization limit for the stationary Stokes problem stated in Proposition 
\ref{Pp1} can be extended to the periodic distribution of obstacles by the method of Allaire 
\cite{Allai4}. Rather general distribution of obstacles and their radii was considered 
by Marchenko and Khruslov \cite[Chapter 4, Theorem 4.7]{MarKhr}, cf. also Namlyeyeva et al. \cite{NamNecSkr}. We leave the details to the interested reader.


\def\cprime{$'$} \def\ocirc#1{\ifmmode\setbox0=\hbox{$#1$}\dimen0=\ht0
	\advance\dimen0 by1pt\rlap{\hbox to\wd0{\hss\raise\dimen0
			\hbox{\hskip.2em$\scriptscriptstyle\circ$}\hss}}#1\else {\accent"17 #1}\fi}

\end{document}